\newcommand{\bburl}[1]{\textcolor{blue}{\url{#1}}}
\newcommand{\burl}[1]{\textcolor{blue}{\url{#1}}}
\newcommand{\mattwo}[4]
{\left(\begin{array}{cc}
                        #1  & #2   \\
                        #3 &  #4
                          \end{array}\right) }
\numberwithin{equation}{section}
\newtheorem{thm}{Theorem}[section]
\newtheorem{cor}[thm]{Corollary}
\newtheorem{lem}[thm]{Lemma}
\newtheorem{defi}[thm]{Definition}
\theoremstyle{plain}
\newtheorem{rem}[thm]{Remark}
\newcommand\be{\begin{equation}}
\newcommand\ee{\end{equation}}
\newcommand\bee{\begin{equation*}}
\newcommand\eee{\end{equation*}}
\newcommand\bea{\begin{eqnarray}}
\newcommand\eea{\end{eqnarray}}
\newcommand\beae{\begin{eqnarray*}}
\newcommand\eeae{\end{eqnarray*}}
\newcommand\bi{\begin{itemize}}
\newcommand\ei{\end{itemize}}
\newcommand\ben{\begin{enumerate}}
\newcommand\een{\end{enumerate}}
\newcommand\bc{\begin{center}}
\newcommand\ec{\end{center}}
\newcommand\ba{\begin{array}}
\newcommand\ea{\end{array}}
\newcommand{\C}{\ensuremath{\mathbb{C}}}
\newcommand{\Z}{\ensuremath{\mathbb{Z}}}
\newcommand\frakfamily{\usefont{U}{yfrak}{m}{n}}
\DeclareTextFontCommand{\textfrak}{\frakfamily}
\newcommand{\supp}{\operatorname{supp}}
\newcommand{\sinc}{\operatorname{sinc}}
\newcommand{\rect}{\operatorname{rect}}
\begin{document}
\title{Upper Bounds for the Lowest First Zero in Families of Cuspidal Newforms}

\author[Tang]{Xueyiming Tang}
\email{\textcolor{blue}{\href{mailto:emily_tang@loomis.org}{emily$\_$tang@loomis.org}}},
\address{The Loomis Chaffee School, Windsor, CT 06095, USA}

%\email{\textcolor{blue}{\href{mailto:txymemily@gmail.com}{txymemily@gmail.com}},
%\address{Your School Info}

\author[Miller]{Steven J. Miller}
\email{\textcolor{blue}{\href{mailto:sjm1@williams.edu}{sjm1@williams.edu}},
\textcolor{blue}{\href{Steven.Miller.MC.96@aya.yale.edu}{Steven.Miller.MC.96@aya.yale.edu}}}
\address{Department of Mathematics and Statistics, Williams College,
Williamstown, MA 01267, USA}

%\author{Steven J. Miller}
%\email{\textcolor{blue}{\href{mailto:sjm1@williams.edu}{sjm1@williams.edu}},  %\textcolor{blue}{\href{Steven.Miller.MC.96@aya.yale.edu}{Steven.Miller.MC.96@aya.yale.edu}}}
%\address{Department of Mathematics and Statistics, Williams College, Williamstown, MA 01267}

%\subjclass[2020]{11Mxx (primary); 45Bxx (secondary).}

\date{}

\begin{abstract}
\onehalfspacing
Assuming the Generalized Riemann Hypothesis, the non-trivial zeros of $L$-functions lie on the critical line with the real part $1/2$. We find an upper bound of the lowest first zero in families of even cuspidal newforms of prime level tending to infinity. We obtain explicit bounds using the $n$-level densities and results towards the Katz-Sarnak density conjecture. We prove that as the level tends to infinity, there is at least one form with a normalized zero within $1/4$ of the average spacing. We also obtain the first-ever bounds on the percentage of forms in these families with a fixed number of zeros within a small distance near the central point.
\end{abstract}

\maketitle

\tableofcontents

%\tableofcontents

%%%%%%%%%%%%%%%%%%%%%%%%%%%%%%%%%%%%%%%%%%%%%%%%%%%%%%%%%%%%%%%%%%%%%%%%%%%%%%%%%%%%%%%%%%%%%%%%%%%%%%%%%%%%%%%%%%%%%%%%%%%%%%%%%
%%%%%%%%%%%%%%%%%%%%%%%%%%%%%%%%%%%%%%%%%%%%%%%%%%%%%%%%%%%%%%%%%%%%%%%%%%%%%%%%%%%%%%%%%%%%%%%%%%%%%%%%%%%%%%%%%%%%%%%%%%%%%%%%%
%%%%%%%%%%%%%%%%%%%%%%%%%%%%%%%%%%%%%%%%%%%%%%%%%%%%%%%%%%%%%%%%%%%%%%%%%%%%%%%%%%%%%%%%%%%%%%%%%%%%%%%%%%%%%%%%%%%%%%%%%%%%%%%%%
\section{Introduction}

%%%%%%%%%%%%%%%%%%%%%%%%%%%%%%%%%%%%%%%%%%%
%%%%%%%%%%%%%%%%%%%%%%%%%%%%%%%%%%%%%%%%%%%
%%%%%%%%%%%%%%%%%%%%%%%%%%%%%%%%%%%%%%%%%%%
\subsection{Background}

The zeros and poles of $L$-functions carry substantial information about various quantities in number theory, from the distribution of primes to class numbers to the Mordell-Weil group (at least conjecturally \cite{BSD1, BSD2}). The starting point of many of these investigations is the Generalized Riemann Hypothesis (GRH): all non-trivial zeros in the critical strip have real part 1/2. As the distribution of these zeros is of great importance, a natural question is the following: \emph{assuming GRH, how far up must we go on the critical line before we are assured that we will see the first zero?}

Several studies have been done on the first zero of $L$-functions. The first critical zero of the Riemann zeta-function is surprisingly large compared to other $L$-functions, around $\frac{1}{2} + 14.135i$. Stephen D. Miller \cite{M} proved an upper bound on the first zero of the form $\frac{1}{2} + i\gamma$ for all $L$-functions of real archimedian type of $-\gamma_0 < \gamma < \gamma_0$, where $\gamma_0 \approx 14.13$. This is generalized in \cite{B--} to hold for general $L$-functions, for $-\gamma_1 < \gamma < \gamma_1$ with $\gamma_1 \approx 22.661$; see also \cite{B, HR}. For elliptic curves, Mestre \cite{Mes} proved the first zero occurs by $O(1 / \log \log N_E)$, where $N_E$ is the conductor of $E$, though we expect the correct scale to study the zeros near the central point is the significantly smaller $1 / \log N_E$.

Complementing the previous studies which gave bounds for the lowest first zero for individual $L$-functions, we explore what improvements arise when we examine all forms in a family and investigate how high up we must go to ensure we see the lowest of the lowest zeros. We prove upper bounds on the lowest first zero above the central point among all the forms for the family of even cuspidal newforms of prime level tending to infinity; similar calculations work for the family of odd cuspidal newforms with a trivial modification (since odd forms have a zero at the central point by symmetry, one simply removes that known contribution from the computations). Moreover, we obtain results on the number of zeros near the central point (low-lying zeros) by obtaining an upper bound on the percentage of forms in the family that have a certain number of zero in a small interval near the central point. We bound the percentages of forms in these families that can have ``many'' zeros ``close'' to the central point (the smaller the window and the greater the number of zeros the less likely this is to happen; we quantify how unlikely).

Below we describe the families of $L$-functions we study, and then state our results. We assume the reader is familiar with the literature; see \cite{IK} for a review of needed properties of $L$-functions, and \cite{BFMT-B, KS2} for a history and results on connections between the behavior of zeros near the central point and eigenvalues of random matrix ensembles. The methods of this paper are an extension of the arguments in \cite{GM,Mes}.

%%%%%%%%%%%%%%%%%%%%%%%%%%%%%%%%%%%%%%%%%%%
%%%%%%%%%%%%%%%%%%%%%%%%%%%%%%%%%%%%%%%%%%%
%%%%%%%%%%%%%%%%%%%%%%%%%%%%%%%%%%%%%%%%%%%
\subsection{Modular Forms}

We quickly review some background material and notation. Given a matrix $\gamma = \mattwo{a}{b}{c}{d}$ and a complex number $z$, we set $\gamma z$ to be $\frac{az + b}{cz + d}$. The matrices $T$ and $S$, with respective actions \begin{equation}
 \mattwo{1}{1}{0}{1} z \ = \ z + 1, \ \ \ \mattwo{0}{-1}{1}{0} z \ = \ -1/z,
\end{equation} generate the group ${\rm SL}_2(\Z)$, the set of $2 \times 2$ integer matrices with determinant 1; below we look at functions that transform nicely under this group, as well as subgroups of this group.

\begin{defi}
Let $f$ be a function on the upper-half of the complex plane $\mathcal{H} = {z \in \C : \Im(z) > 0}$. A modular form of weight $k$ is an meromorphic function $f: \mathcal{H} \to \C$ such that \begin{equation}
    f(\gamma z) \ = \ (cz+d)^k f(z)
\end{equation}
for all $\gamma\ =\ \left(\begin{smallmatrix}
    a & b\\
    c & d
\end{smallmatrix}\right) \in SL_2(\Z)$ (the set of two by two integer matrices with  determinant 1). Additionally, $f$ is bounded as $\Im(z)$ goes to infinity.
\end{defi}

Notice that if we take $\gamma = \left(\begin{smallmatrix}
1 & 1\\
0 & 1
\end{smallmatrix}\right) \in SL_2(\Z)$, we have $f(z + 1) = f(z)$, which means that $f$ is periodic with period 1. The cusps forms are modular forms where $f(z)$ approaches $0$ when $\Im(z) \to \infty$.

We can generalize the above definition through additional restrictions on matrices under which our functions should transform nicely. A modular form of level $N$ has the additional property that we only care about the transformation rule for $f(\gamma z)$ when the lower left entry of $\gamma$ (what we called $c$ above) equals zero modulo $N$.

\begin{defi}[Cuspidal Newforms]
Let $H^\star_k(N)$ be the set of holomorphic cusp forms of weight $k$ that are newforms\footnote{This means they are not lifted trivially from a modular form of level $M|N$.} of level $N$. For every $f\in H^\star_k(N)$, we have a Fourier expansion
    \begin{equation}
        f(z)\ = \ \sum_{n=1}^\infty a_f(n) e(nz)
    \end{equation} with $e(z) = e^{2\pi i z}$. We set $\lambda_f(n) =  a_f(n) n^{-(k-1)/2}$. The $L$-function associated to $f$ is
    \begin{equation}\label{eq: L-function}
        L(s,f)\ =\ \sum_{n=1}^\infty \lambda_f(n) n^{-s}.
    \end{equation}
The completed $L$-function is
    \begin{equation}\label{eq:completed_L_func}
        \Lambda(s,f) \ :=\ \left(\frac{\sqrt{N}}{2\pi}\right)^s \Gamma\left(s+\frac{k-1}{2}\right) L(s,f).
    \end{equation}
\end{defi}

Each $\Lambda(s,f)$ satisfies a functional equation $\Lambda(s,f) = \epsilon_f \Lambda(1-s,f)$, where $\epsilon_f$ is either +1 or -1. We can thus separate $H^\star_k(N)$ into two subsets: $$H^+_k(N)\ =\ { f\in H^\star_k(N): \epsilon_f = +1} \ \ \  {\rm and} \ \ \  H^-_k(N) \ =\ { f\in H^\star_k(N): \epsilon_f = -1}.$$ The first set are the even forms (which we study below), while the second are the odd ones. We adjusted the coefficients of our forms by removing the factor $n^{-(k-1)/2}$ so that the functional equation relates $s$ to $1-s$.

Since the work of Montgomery and Dyson \cite{Mon}, there is now an extensive literature that many properties of $L$-functions, from their values to the distribution of spacings between their zeros, is well modeled by ensembles of matrices (for details see \S\ref{subsec:rmt} as well as the references mentioned earlier). The symmetry group corresponding to $H^\star_k(N)$ is the Orthogonal group, denoted as ${\rm O}$. For the subset $H^+_k(N)$, its associated symmetry group is scaling limit of the Special Orthogonal group of even sized matrices, represented as ${\rm SO(even)}$, and for $H^-_k(N)$, its associated symmetry group is the scaling limit of the Special Orthogonal group of odd sized matrices, represented as ${\rm SO(odd)}$.

%%%%%%%%%%%%%%%%%%%%%%%%%%%%%%%%%%%%%%%%%%%
%%%%%%%%%%%%%%%%%%%%%%%%%%%%%%%%%%%%%%%%%%%
%%%%%%%%%%%%%%%%%%%%%%%%%%%%%%%%%%%%%%%%%%%
\subsection{Random Matrix Theory}\label{subsec:rmt}

Montgomery's work in 1972 unveiled a fascinating link between the distribution of zeros of the Riemann zeta-function and the distribution of eigenvalues of random Hermitian matrices.
He conjectured that the $n$-level correlation function of the zeta function's zeros and that of the Gaussian Unitary Ensemble (GUE) coincide; for suitably restricted test functions this was later proven by Hejhal \cite{Hej} for the 3-level of the Riemann zeta function and Rudnick and Sarnak \cite{RS} for all $n$ for all automorphic $L$-functions). As our goal is to study other statistics than the $n$-level correlations, we just remark that this was the first statistic studied, and knowing these correlations for all $n$ yields the spacing distribution between adjacent zeros. Odlyzko calculated the zeros of the zeta-function at the height of the $10^{20{\rm th}}$ zero and observed that main term of the spacing of nearby zeros agrees with that of the GUE (see \cite{H, BFMT-B, Con} and their references for more details on these connections).

The reason we move to other statistics is that although the $n$-level correlations provide amazing results of the distribution of zeros of one form, these are defined through limits of sums over the zeros on the critical line and are thus insensitive to finitely many zeros. In particular, they cannot say anything about the behavior near the central point, which is often of great arithmetic interest. Katz and Sarnak \cite{KS1, KS2} then observed that the statistics of zeros near the central point of many $L$-functions also align with the eigenvalues near 1 of classical compact groups, and thus the GUE is not the full story. Yhey introduced a statistic, the $n$-level densities, where unlike the correlations most of the contribution is from the zeros very close to the central point and it is thus very sensitive to finitely many zeros.

\begin{defi}[$n$-Level Density]\label{nleveldensity}
The $n$-level density of an $L$-function $L(s,f)$ for a \emph{test function} $\Phi: \mathbb{R}^n \to \mathbb{R}$ is defined as
\begin{equation}\label{def:density}
    D_n (f; \Phi) \ := \  \sum_{\substack{j_1, \cdots, j_n \\ j_i \neq \pm j_k}} \Phi \left( \frac{\log c_f}{2\pi} \gamma_{f, j_1}, \dots, \frac{\log c_f}{2\pi} \gamma_{f, j_n} \right),
\end{equation}
where $c_f$ is the analytic conductor of $f$ and the non-trivial zeros\footnote{We are not assuming} of $L(s, f)$ are denoted by $\frac{1}{2} + \gamma_f^{(j)}$. In many applications we assume $\Phi$ factors as a product of $n$ copies of a function $\phi$: $\Phi(x_1, \dots, x_n) = \prod_{j=1}^n \phi(x_j)$. We denote a zero scaled by the logarithm of the analytic conductor by a tilde; thus $\widetilde{\gamma}_f^{(j)} = \gamma_f^{(j)} \frac{\log c_f}{2\pi}$.
\end{defi}

The analytic conductor arises from the functional equation of the $L$-function, and near the central point the spacing between adjacent zeros is on the order of the reciprocal of its logarithm. This thus provides the natural scale to study the distance of zeros from the central point; on average it is around $1/\log c_f$ from one such zero to another; we are interested in how often the first or first few zeros are less than a fixed percentage of this quantity. In other words, how often is the first zero less than half the average spacing? For families of even $N$ cuspidal newforms, the analytic conductor doesn't depend on the form.

\ \\
\textbf{\emph{NOTE: Whenever we talk about bounds on the location of zeros near the central point, we always mean relative to the average spacing; thus a bound of .25 means .25 times the reciprocal of the logarithm of the analytic conductor.}}\\ \

We must balance between reaching the optimal results and having calculations that can be done in closed form; to deal with these tradeoffs we choose a test function $\Phi$ with certain characteristics that yield very good bounds while still having feasible calculations. As the zeros are symmetric about the real axis, we need only study $\Phi$ even.\footnote{Any test function is the sum of an even and an odd function, the sum of the odd part over the zeros vanish, so there is no need to include it.} We always choose $\Phi$ to be a Schwartz function, so it and all its derivatives decay faster than $|x|^{-A}$ for any $A>0$.

In order to execute the number theory sums that arise, we need the test function to have compact support for its Fourier transform, defined in \eqref{fouriertransform}. If we could have the test function concentrated near the central point and rapidly decaying, we would glean a significant amount of information on what is happening there; for example, if we could take a delta spike then the only contribution would be from zeros at the central point. Sadly, just like the Heisenberg Uncertainty Principle that illustrates the trade-off between knowing a particle's position and momentum, we cannot have both of the above conditions fulfilled simultaneously. A more concentrated test function means a larger support of its Fourier transform, and a more compact support of the Fourier transform results in less decay for the test function and thus potentially larger contributions from zeros away from the central point. To investigate the behavior of the first (or first few) zeros above the central point, we impose additional requirements for the test function, with the conditions varying depending on what we are trying to prove.

For Theorem \ref{evenmainresult}, we use the naive test function, defined as follows. In all applications it will be clear what $\sigma_n$ is: it is the largest support for which we know the $n$-level density or $n$\textsuperscript{th} centered moments. To indicate this dependence on $\sigma_n$ we include a subscript $n$ below.

\begin{defi}[Naive Test Function]\label{defi:naive} The naive test function for the $n$-level is defined by
\begin{equation}\label{naive}
    \phi_{{\rm naive}; n}(x) \ := \ \left(\frac{\sin(\pi \sigma_nx)}{\pi \sigma_n x}\right)^2,
\end{equation} and its Fourier transform is \begin{equation}
    \widehat{\phi}_{{\rm naive}; n}(y)\ =\ \frac{1}{\sigma_n}\left(y-\frac{|y|}{\sigma_n}\right)
\end{equation} for $|y| < \sigma_n$ and zero otherwise.
\end{defi}

Since the naive test function is non-negative, it is useful in determining upper bounds as we can drop non-negative contributions of the density. Some previous studies such as \cite{BCDMZ, FM, ILS} attempt to find the optimal non-negative test functions for certain level densities; however, since the improvement in results is quite small, we are satisfied with just using the naive test function as it greatly simplifies the exposition, though with additional work slight improvements are possible.

For Theorem \ref{oddmainresult}, we have a different set of characteristics for the test function $\phi$. Instead of being completely non-negative, we now only want it to be non-negative within a certain distance from the central point, and non-positive beyond (and $\phi(0) \neq 0$). Such a setup allows us to drop non-positive contributions to the density, thus getting a lower bound. To find such $\phi$, it is easier to first construct its finitely supported Fourier transform; we provide details in \S\ref{construction}.

%%%%%%%%%%%%%%%%%%%%%%%%%%%%%%%%%%%%%%%%%%%
%%%%%%%%%%%%%%%%%%%%%%%%%%%%%%%%%%%%%%%%%%%
%%%%%%%%%%%%%%%%%%%%%%%%%%%%%%%%%%%%%%%%%%%
\subsection{Centered Moments}
The $n$\textsuperscript{th} centered moments are an alternative, but essentially equivalent after some combinatorics, statistics to the $n$-level densities. Hughes-Miller \cite{HM} show their expansions are better suited to obtain bounds than the determinant expansions of Katz-Sarnak. For the following theorem from \cite{C--}, we are only able to take the $\sigma$ up to $2$, though the Random Matrix Theory correspondence suggests that we can take arbitrarily large support. We state the results below for general $\sigma$, so improvements in those calculations immediately transfer to improvements here.

\begin{thm}\label{bound} Assume GRH.
    Let $n \geq 2$ and ${\rm supp}(\phi) \subset (-\frac{\sigma}{n}, \frac{\sigma}{n})$, where by \cite{C--} we may take $\sigma = 2$. Define
    \begin{equation}
      \sigma_{\phi}^2 \ := \ 2\int_{-\infty}^{\infty} |y|\widehat{\phi}(y)^2 dy
    \end{equation}
    and
\begin{multline}
    R(m, i; \phi) \ := \ 2^{m-1}(-1)^{m+1}\sum_{l=0}^{i-1} (-1)^l \binom{m}{l} \\ \left(-\frac{1}{2}\phi^m(0)  +\int_{-\infty}^{\infty} \cdots \int_{-\infty}^{\infty} \widehat{\phi}(x_2) \cdots \widehat{\phi}(x_{l+1}) \right. \\ \left. \int_{-\infty}^{\infty} \phi^{m-l}(x_1)\frac{\sin(2\pi x_1(1+|x_2|+\cdots+|x_{l+1}|))}{2\pi x_1}dx_1 \cdots dx_{l+1} \right)
\end{multline}
and
\begin{equation}
    S(n, a, \phi) \ := \ \sum_{l=0}^{\lfloor\frac{a-1}{2}\rfloor} \frac{n!}{(n-2l)!l!}R(n-2l,a-2l,\phi)\left(\frac{\sigma_{\phi}^2}{2}\right)^l.
\end{equation} By $\langle Q(f) \rangle_{N;\pm}$ we mean the average of $Q(f)$ over all $f$ in the family of even (odd) cuspidal newforms of level $N$ for the positive (negative) sign (the number of such forms is proportional to $N$).
Then
\begin{equation}\label{RHSlimit}
    \lim_{\substack{N\to\infty \\ N \text{\rm prime}}} \langle \left(D(f;\phi) - \langle D(f;\phi) \rangle_{N;\pm} \right)^{n}\rangle_{N;\pm}  \ = \ 1_{n \ \rm even}(n-1)!! \sigma_{\phi}^n \pm S(n, a; \phi),
\end{equation}
for
\[
    1_{n \ \rm even}(n) \ = \
    \begin{cases}
      1 & \emph{if } n = 2m \emph{ is even} \\
      0 & \emph{if } n = 2m + 1 \emph{ is odd.}
   \end{cases}
\]
\end{thm}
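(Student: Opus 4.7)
The plan is to combine the explicit formula for the Dirichlet series $L(s,f)$ with the Petersson trace formula, following the strategy of Hughes--Miller \cite{HM}, and then perform a careful combinatorial reorganization of the resulting multiple prime sums.

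First I would apply the explicit formula to express
$$D(f;\phi) \ = \ \widehat{\phi}(0) + \frac{\phi(0)}{2} \ - \ 2\sum_p \frac{\lambda_f(p)\log p}{\sqrt{p}\,\log c_f}\,\widehat{\phi}\!\left(\frac{\log p}{\log c_f}\right) \ + \ (\text{prime-power terms}) \ + \ O\!\left(\frac{1}{\log c_f}\right),$$
with an additional contribution from the sign $\epsilon_f$ when projecting onto $H^\pm_k(N)$ via the indicator $\frac{1}{2}(1\pm\epsilon_f)$. Writing $T(f;\phi)$ for the principal prime sum, the mean $\langle D(f;\phi)\rangle_{N;\pm}$ absorbs the constant piece and the $p^2$ contribution coming from $\lambda_f(p)^2 = 1 + \lambda_f(p^2)$, leaving
$$D(f;\phi) - \langle D(f;\phi)\rangle_{N;\pm} \ = \ -\,T(f;\phi) \ + \ (\text{sign-driven correction}) \ + \ o(1).$$
Raising this difference to the $n$-th power and averaging over $f \in H^\pm_k(N)$ then produces $n$-fold sums of products $\lambda_f(p_1)\cdots\lambda_f(p_n)$ which I would evaluate in the limit $N\to\infty$ by the Petersson trace formula.

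The surviving contributions split into two types. The \emph{diagonal} contributions, where the $n$ primes pair into $n/2$ pairs and each $\lambda_f(p)^2$ collapses to $1$ via orthogonality, produce the Gaussian moment $\mathbf{1}_{n\text{ even}}(n-1)!!\,\sigma_\phi^n$; the factor $\sigma_\phi^2 = 2\int|y|\widehat{\phi}(y)^2\,dy$ arises from converting the resulting prime sum over a single pair to a prime-number-theorem integral. The \emph{sign-dependent} contributions come from the $\epsilon_f$ piece of the projector, which in the Petersson average is a Kloosterman--Bessel term that survives in the limit and is indexed by how many of the $n$ factors pair off ``inside'' versus produce an evaluation at zero. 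Expanding this systematically with the binomial identity $\sum_{l}(-1)^l\binom{m}{l}$ (governing how many of the $m = n - 2l$ unpaired factors are moved from a pointwise value $\phi^{m}(0)$ onto a nested convolution) reproduces exactly the definition of $R(m,i;\phi)$, and the sum over pair counts $l$ produces the factor $\frac{n!}{(n-2l)!\,l!}(\sigma_\phi^2/2)^l$ in $S(n,a;\phi)$. The overall sign $\pm$ tracks the sign of the projector.

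The main technical obstacle is the support restriction. One needs $\mathrm{supp}(\widehat{\phi})\subset(-\sigma/n,\sigma/n)$ so that in an $n$-fold product the total length of the supports is at most $\sigma$, ensuring that the off-diagonal (Kloosterman) contributions from the Petersson formula are provably negligible in the limit; the admissible $\sigma = 2$ for the $1$-level density on this family is precisely the input imported from \cite{C--}, and improvements there transfer directly to improvements in the range of $\phi$ for which this theorem holds. Subsidiary steps include verifying that contributions from $p^k$ with $k\geq 3$ are $O(1/\log c_f)$ and can be discarded, checking that the combinatorics of index matchings gives exactly the double factorial $(n-1)!!$, and tracking signs and constants through the conversion of prime sums to integrals against $\widehat{\phi}$. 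I expect no step to require ideas beyond those in \cite{HM} and \cite{C--}, but the bookkeeping of the alternating-sign nested-convolution expansion that yields $R(m,i;\phi)$ will be the most delicate part.
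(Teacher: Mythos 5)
The paper does not prove Theorem \ref{bound}: it introduces it with the phrase ``the following theorem from \cite{C--}'' and uses it as a black box, so strictly speaking there is no in-paper proof for your proposal to be compared against; the correct source for a proof is Cohen et al.\ \cite{C--}. That said, your sketch does track the genuine strategy behind that result and its predecessor Hughes--Miller \cite{HM}: the explicit formula to pass from sums over zeros to sums over primes, the Petersson trace formula for the family average, the projector $\tfrac{1}{2}(1 \pm \epsilon_f)$ onto fixed sign, and the split into a diagonal piece producing the Gaussian moment $(n-1)!!\,\sigma_\phi^n$ and a sign-dependent piece producing $\pm S(n,a;\phi)$.

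Where your proposal falls short is precisely where the hard work lives. First, the claim that ``the admissible $\sigma=2$ for the $1$-level density on this family is precisely the input imported from \cite{C--}'' conflates two different results: support $2$ for the $1$-level density is Iwaniec--Luo--Sarnak \cite{ILS}, while the actual content of \cite{C--} is the nontrivial combinatorial reorganization that pushes the $n$-level centered moments from the $1/(n-1)$ of \cite{HM} up to $2/n$. Your outline offers no indication of what that new idea is; it is not a matter of simply observing that an $n$-fold product has total support at most $\sigma$. Second, for $N$ prime the sign $\epsilon_f$ is tied to $\lambda_f(N)$, and the contributions it generates under Petersson are controlled differently than your description of ``a Kloosterman--Bessel term that survives in the limit'' suggests. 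Third, the parameter $a$ appears in the target formula and in the definition of $S(n,a;\phi)$ but is never explained in your sketch. None of this is fatal to the roadmap, but the result is a description of the genre of proof rather than a proof; for the present paper's purposes, citing \cite{C--}, as the authors do, is the appropriate move, and reproducing that argument would be a separate project.
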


%%%%%%%%%%%%%%%%%%%%%%%%%%%%%%%%%%%%%%%%%%%
%%%%%%%%%%%%%%%%%%%%%%%%%%%%%%%%%%%%%%%%%%%
%%%%%%%%%%%%%%%%%%%%%%%%%%%%%%%%%%%%%%%%%%%
\subsection{Main Results}
We obtain results on the number and location on low-lying zeros of the family of cuspidal newforms of level $N$ through $n$\textsuperscript{th} centered moment. In Theorem \ref{oddmainresult} we give an upper       bound $(-\omega, \omega)$ on the lowest first zero of the family through odd-centered moments. We prove additional results in Theorem \ref{evenmainresult} for zeros near the central point and provide an upper bound on the percentage of forms in the family with at least $r$ zeros within a certain interval $(-\rho, \rho)$ near the central point.

Previous work focused on using the 1-level density to obtain such bounds; the $n$-level densities and $n$\textsuperscript{th} centered moments are far more difficult to work with, leading to significantly harder integrals as well as combinatorial challenges in removing the contributions of certain forms and isolating the desired results. However, as the support increases these results are significantly better than those from the 1-level, justifying the efforts; other work has shown the value of this approach with better results for bounding the probability of vanishing to order $r$ or greater at the central point; see \cite{DM, LM}. Instead of trying to bound the lowest zero of one form, our question is different as we are searching for the lowest of the low and thus are able to get significantly better results.

\begin{thm}\label{oddmainresult} Assume GRH.
    Let $h(y)$ be an even function that is at least twice continuously differentiable with $h$ supported in $[-1, 1]$, and monotonically decreasing from $0$ to $1$. We define $f(y) = h(2yn/ \sigma$), $g(y) = (f \ast f)(y)$ (convolution of $f$ with $f$). Let $\widehat\phi_{\omega}(y)$, the Fourier transform of $\phi_{\omega}(x)$, equal $g(y) + (2 \pi \omega)^{-2} g''(y)$. Thus we have $\supp (\widehat{\phi}_{\omega}) \subset (-\sigma/n, \sigma/n)$, and $\phi_{\omega}(x)$ non-negative when $|x|\leq \omega$ and non-positive when $|x|>\omega$. Then for an odd $n = 2m + 1$, whenever $\omega$ satisfies the inequality
    \begin{equation}
        -\left( \widehat\phi_{\omega}(0) + \frac{1}{2}\int_{-\sigma/n}^{\sigma/n}\widehat\phi_{\omega}(y)dy\right)^{n} < \ 1_{n \ \rm even}(n-1)!! \sigma_{\phi_{\omega}}^n + S(n, a; \phi_{\omega}),
    \end{equation}
    then there exists at least one form with at least one normalized zero in the interval $(-\omega, \omega)$. In particular, if we denote $(-\omega_{\min},\omega_{\min})$ to be the interval near the central point that contains at least one normalized zero, then the one-level density gives us an explicit expression that there is at least one zero in this interval whenever $\omega_{\min}$ satisfies
    \begin{equation}
        \omega_{\min}(\sigma, h) \ > \ \left( -\frac{\sigma \int_{0}^{1}h(u)^{2}\,du + \frac{\sigma^2}{4}\int_{-1}^{1}\int_{0}^{2/\sigma} h(u) h(v - u) \,dv \,du}{\frac{1}{\sigma}\int_{0}^{1} h(u)h''(u) \,du + \frac{1}{4}\int_{-1}^{1}\int_{0}^{2/\sigma} h(u) h''(v - u) \,dv \,du}\right)^{-\frac{1}{2}} \pi^{-1}.
    \end{equation}
    For the support $\sigma = 2$ (which is the largest known value we can take for this family if we only assume GRH, though it is conjectured to hold for all $\sigma$) and $h = \cos(\pi x/2)$ for $|x| \le 1$ and 0 otherwise (the optimum function to use for one-level density), the smallest first non-trivial normalized zero of the family of even cuspidal newforms has imaginary part $\gamma$ in the interval $[-\gamma_{\min}, \gamma_{\min}]$ with $\gamma_{\min} \approx 0.25$.
\end{thm}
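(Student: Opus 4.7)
The proof is by contradiction using the centered-moment identity from Theorem \ref{bound}. Assume that for all sufficiently large primes $N$, no form $f \in H_k^+(N)$ has a normalized zero in $(-\omega,\omega)$. The construction of $\phi_\omega$ (detailed in \S\ref{construction}) produces a test function whose Fourier transform is supported in $(-\sigma/n, \sigma/n)$, spatially non-negative on $[-\omega,\omega]$, and non-positive for $|x|>\omega$. Every scaled non-trivial zero $\widetilde\gamma_f^{(j)}$ of $f$ then lies in the region where $\phi_\omega\le 0$, so each summand of $D(f;\phi_\omega) = \sum_j \phi_\omega(\widetilde\gamma_f^{(j)})$ is non-positive, and therefore $D(f;\phi_\omega) \le 0$ for every $f$ in the family. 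Averaging shows $\mu_N := \langle D(f;\phi_\omega) \rangle_{N;+} \le 0$ as well.

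Since $-\mu_N \ge 0$ and the odd-power map $t \mapsto t^n$ is monotone increasing on $\mathbb{R}$, the pointwise inequality $D(f;\phi_\omega) - \mu_N \le -\mu_N$ gives $(D(f;\phi_\omega)-\mu_N)^n \le (-\mu_N)^n = -\mu_N^n$ for each $f$. Taking the family average and then the limit $N\to\infty$ through primes, Theorem \ref{bound} identifies the limit of the left-hand side with $S(n,a;\phi_\omega)$ (the $(n-1)!!\sigma_\phi^n$ Gaussian piece drops out because $n$ is odd), while $\mu_N$ converges to $\mu_\infty = \widehat\phi_\omega(0) + \tfrac{1}{2}\int_{-\sigma/n}^{\sigma/n}\widehat\phi_\omega(y)\,dy$ via the one-level density for $H_k^+(N)$ with $\mathrm{SO}(\mathrm{even})$ symmetry (whose kernel $1 + \tfrac{\sin 2\pi x}{2\pi x}$ has Fourier transform $\delta + \tfrac{1}{2}\mathbf{1}_{[-1,1]}$, so Plancherel produces the quoted expression). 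Combining yields $S(n,a;\phi_\omega) \le -\mu_\infty^n$, in direct contradiction with the standing hypothesis $-\mu_\infty^n < S(n,a;\phi_\omega)$; hence some $f\in H_k^+(N)$ must have a normalized zero in $(-\omega,\omega)$.

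For the explicit one-level bound I specialize to $n=1$, where the contradiction collapses to the simpler statement that $\mu_\infty > 0$ already forces a zero in $(-\omega,\omega)$. Writing $\widehat\phi_\omega(y) = g(y) + (2\pi\omega)^{-2}g''(y)$ with $g=f\ast f$ and $f(y)=h(2y/\sigma)$, I expand
\begin{equation*}
\mu_\infty \ = \ g(0) + (2\pi\omega)^{-2} g''(0) + \tfrac{1}{2}\!\int \widehat\phi_\omega(y)\,dy,
\end{equation*}
compute $g(0) = \sigma\int_0^1 h(u)^2\,du$ and $g''(0) = (4/\sigma)\int_0^1 h(u)h''(u)\,du$ using the substitution $v=2u/\sigma$ together with evenness of $h$, and unpack the remaining convolutions to obtain the double integrals over $[-1,1]\times[0,2/\sigma]$ featured in the theorem. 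Solving the resulting linear inequality in $(2\pi\omega)^{-2}$ for $\omega$ gives the closed-form threshold $\omega_{\min}$; plugging in $\sigma=2$ and $h(x)=\cos(\pi x/2)\mathbf{1}_{[-1,1]}(x)$ reduces every quantity to elementary trigonometric integrals and produces $\omega_{\min}\approx 0.25$.

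The main obstacle is not the logical skeleton of the contradiction, which is compact, but rather the combinatorial bookkeeping required to unwrap $S(n,a;\phi_\omega)$ into iterated convolutions of $h$ and to certify that the standing inequality actually admits an admissible $\omega$ for each odd $n$. The sign behavior of $\phi_\omega$ across $|x|=\omega$, which is what converts the non-existence assumption into the pointwise bound $D(f;\phi_\omega)\le 0$, is the content of the construction in \S\ref{construction} and must be imported as a lemma rather than rederived here.
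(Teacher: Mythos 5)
Your proof is correct and follows essentially the same route as the paper: a contradiction argument using the centered-moment identity of Theorem \ref{bound}, the sign behavior of $\phi_\omega$ from \S\ref{construction}, and the monotonicity of the odd-power map. Your packaging is slightly cleaner --- you assume from the outset that no form has a zero in $(-\omega,\omega)$, conclude $D(f;\phi_\omega)\le 0$ for every $f$, and then apply $t\mapsto t^n$ to the pointwise bound $D-\mu_N\le -\mu_N$ --- whereas the paper first drops the non-positive contributions from $|\tilde\gamma|>\omega$ (obtaining the intermediate lower bound $S(n,a;\phi_\omega)\le\lim\langle(\sum_{|\tilde\gamma|\le\omega}\phi_\omega-\mu)^n\rangle$) and only then invokes the no-zero assumption. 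Both yield $S(n,a;\phi_\omega)\le(-\mu_\infty)^n$, contradicting the standing hypothesis. Your observation that at $n=1$ the criterion collapses to the asymptotic one-level mean being positive is also accurate and is exactly what the cancellation of the $-\tfrac{1}{2}\phi_\omega(0)$ terms in the paper's derivation of \eqref{phiequality} implements. One small imprecision worth flagging: you write $\mu_\infty=\widehat\phi_\omega(0)+\tfrac12\int_{-\sigma/n}^{\sigma/n}\widehat\phi_\omega$ and in the same breath invoke the Plancherel pairing with the $\mathrm{SO}(\mathrm{even})$ kernel, which gives $\widehat\phi_\omega(0)+\tfrac12\int_{-1}^{1}\widehat\phi_\omega$; these agree only when $\sigma/n\le 1$, so for $n=1,\ \sigma=2$ the two are not the same and one must use the latter (the paper's explicit derivation in fact lands on the $\int_0^1$ version). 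This is a presentational wrinkle shared with Theorem \ref{mean} in the paper itself and does not affect the validity of the argument.
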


While increasing the level $n$ gives better results, if we can only use $\sigma = 2$ (currently the best known result under the assumption of just GRH) as we compare the bounds arising from using results from levels $1$, $3$ and $5$, we actually see worse results as the level increases. We prove that $\omega$ and $\sigma$ are inversely proportional, which means that if we take a larger $\sigma$, $\phi_\omega$ can be more focused around the central point, better approximating the delta spike, and then the benefit of raising to the power $n$ comes into play and yields smaller intervals. The Random Matrix Theory conjectures imply that we can take $\sigma$ arbitrarily large (though as remarked results are proven in the number theory side, for sums over zeros, under GRH only for $\sigma$ up to $2$, though assuming additional conjectures allows one to increase slightly). Thus, if we use a $\sigma$ greater than $2$, we see that eventually as the support is large enough, the higher levels yield better results.

\begin{rem}
    We examined additional results from level 3 and 5. Unfortunately, higher levels involve difficult integrals to evaluate that do not have closed form evaluations, especially when we are using a complicated test function involving Fourier transforms and convolution. While we cannot use theoretical arguments or software packages such as Mathematica to evaluate in closed form the multi-dimensional integrals on the right-hand side, by using Riemann Sums we can calculate the integrals accurately enough by having the error term sufficiently small, and evaluate the integrals up to a small enough error for our purposes. We found that the result from the 3-level eventually is  better than that from 1-level, for example when $\sigma = 7$, and the 5-level statistics beats both one and three levels when $\sigma = 6$. These results highlight the importance of deriving the theory on bounds arising from higher moments.
\end{rem}

Previous investigations have mostly been concerned with bounds on the first zero above the central point, though Goes-Miller \cite{GM} obtained upper and lower bounds for the number of zeros in one-parameter families of elliptic curves in windows near the central point. We build on their work and bound the percentage of forms with a large number of zeros in a small window about the central point; there is tremendous freedom in assigning values to ``large'' and ``small'' above; in brief, we obtain excellent bounds that the probability is very low of many more zeros in a small window than expected. As the averaging formulas for cuspidal newforms are better than those for elliptic curve $L$-functions, we can do the number theory calculations for larger $\sigma$ and thus obtain better results than in the elliptic curve case. Specifically, we bound the percentage of forms in the family of cuspidal newforms $\mathcal{F}_N$ with $r$ normalized zeros on the interval $(-\rho, \rho)$. If $\rho$ is sufficiently small we expect there to be few such zeros.

\begin{thm}\label{evenmainresult} Assume GRH.
    Let $P_{r, \rho}(\mathcal{F})$ denote the limit when N tends infinity of the percent of forms in $\mathcal{F}_N$ that have at least $r$ normalized zeros on the interval $(-\rho, \rho)$. For $r \geq \mu(\phi, \mathcal{F}) / \phi(\rho)$ and $n = 2m$ is even,
    \begin{equation}
        P_{r, \rho}(\mathcal{F}) \ \leq \ \frac{1_{n \ \rm even}(n-1)!! \sigma_{\phi}^n \pm S(n, a; \phi)}{(r\phi(\rho) - \mu(\phi, \mathcal{F}))^n},
    \end{equation}
    where $\phi$ is an even, non-negative Schwartz test function.
\end{thm}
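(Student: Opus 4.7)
My plan is to derive the bound as a Markov/Chebyshev-type inequality applied to the $n$\textsuperscript{th} centered moment of the $1$-level density $D(f;\phi)$, with the numerator controlled by Theorem~\ref{bound}. The key point is that having many zeros close to the central point forces $D(f;\phi)$ to be large, so this event must be rare whenever the centered moment is small.

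First I would establish the pointwise lower bound. Because $\phi$ is non-negative, every zero contributes non-negatively to $D(f;\phi)=\sum_j \phi(\widetilde\gamma_f^{(j)})$. Assuming $\phi$ is chosen so that $\phi(x)\ge \phi(\rho)$ on $[-\rho,\rho]$ (automatic, for instance, when $\phi$ is even and non-increasing on $[0,\rho]$, as for the naive test function in Definition~\ref{defi:naive} with $\rho$ below its first zero), any form $f$ with at least $r$ normalized zeros in $(-\rho,\rho)$ satisfies
\[
D(f;\phi)\ \ge\ \sum_{|\widetilde\gamma_f^{(j)}|<\rho}\phi\bigl(\widetilde\gamma_f^{(j)}\bigr)\ \ge\ r\,\phi(\rho).
\]
Setting $M_N := \langle D(f;\phi)\rangle_{N;\pm}$ and $\mu := \mu(\phi,\mathcal{F})=\lim_{N\to\infty} M_N$, every such form obeys $D(f;\phi)-M_N\ \ge\ r\phi(\rho)-M_N$. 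Since $n=2m$ is even, $(D(f;\phi)-M_N)^n\ge 0$, and when the right-hand side is non-negative---in the limit this is exactly the hypothesis $r\ge \mu/\phi(\rho)$---raising to the $n$\textsuperscript{th} power preserves the inequality.

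Next I would apply Markov's inequality: the fraction of $f\in\mathcal{F}_N$ with at least $r$ normalized zeros in $(-\rho,\rho)$ is bounded by
\[
\frac{\langle (D(f;\phi)-M_N)^n\rangle_{N;\pm}}{(r\phi(\rho)-M_N)^n}.
\]
Letting $N\to\infty$ through primes, the numerator converges by Theorem~\ref{bound} to $1_{n\ \rm even}(n-1)!!\,\sigma_{\phi}^n\pm S(n,a;\phi)$, while $M_N\to\mu(\phi,\mathcal{F})$ in the denominator, yielding the stated inequality.

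The main obstacle is not the probabilistic step, which is cheap once the moment is in hand, but the analytic bookkeeping around the test function and the limit. One must verify (and if necessary restrict the window $\rho$ so that) $\phi(x)\ge \phi(\rho)$ holds on $[-\rho,\rho]$, which constrains the admissible $\phi$ and window sizes used in applications; and one must check that $r\phi(\rho)-M_N>0$ for all sufficiently large $N$ whenever $r\ge \mu/\phi(\rho)$ in the limit, so that the Markov bound is non-vacuous before the passage to the limit. With these caveats the proof is essentially a moment-method tail bound whose strength derives entirely from the sharp centered-moment control supplied by Theorem~\ref{bound}; the dependence on the symmetry group enters only through the $\pm$ sign in front of $S(n,a;\phi)$.
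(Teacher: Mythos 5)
Your proposal is correct and takes essentially the same approach as the paper: the paper likewise restricts the average to the subfamily $\mathcal{F}_{N,r}^{(\rho)}$ of forms with at least $r$ normalized zeros in $(-\rho,\rho)$, lower-bounds the centered summand by $(r\phi(\rho)-\mu(\phi,\mathcal{F}))^n$ using non-negativity and monotonicity of $\phi$ together with $n$ even and $r\ge\mu(\phi,\mathcal{F})/\phi(\rho)$ (so the far-zero contribution $T_f(\phi)$ can be dropped without decreasing the $n$\textsuperscript{th} power), and then divides by that constant. Your Markov/Chebyshev framing is simply a compact name for that same chain of inequalities, and your caveats (that $\phi(x)\ge\phi(\rho)$ on $[-\rho,\rho]$, and that $r\phi(\rho)-M_N$ be non-negative before passing to the limit) correctly identify the technical points the paper's argument implicitly relies on.
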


Previous studies \cite{DM, HM, LM}, which bounded the order of vanishing through $n$-level densities and $n$\textsuperscript{th} centered moments, observed an interesting phenomenon when one increased the level. While it is predicted that higher levels would yield better bounds, one only sees the improvement for higher order vanishing (note this is similar to the effect we observed above in where there must be at least one form with at least one zero sufficiently close). In fact, better bounds for small ranks are produced through lower levels. We see a similar pattern in our bounds resulting from Theorem \ref{evenmainresult}.

\begin{figure}[ht]
    \centering
    \includegraphics[width=10cm]{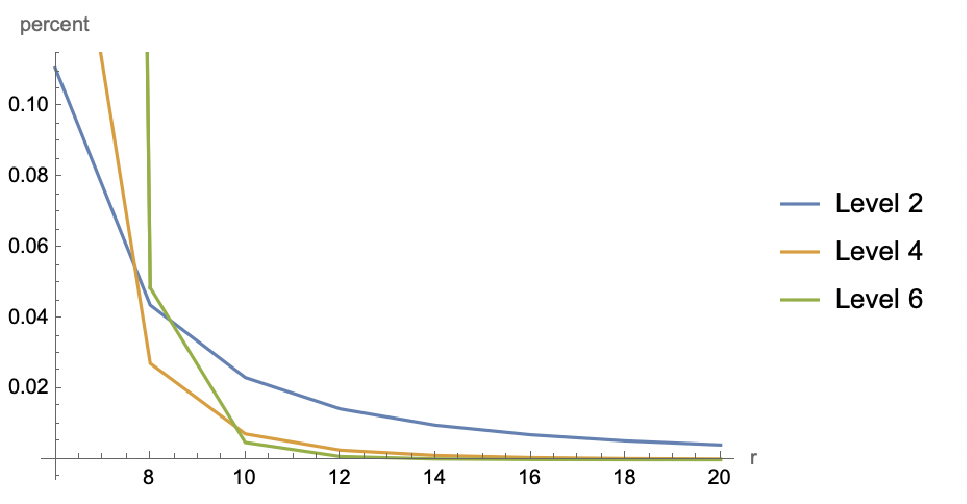}
    \caption{Percentage vs. number of zeros for a fixed $\rho = .4$; for ease of viewing we have connected the discrete data to piece-wise linear curves.}
    \label{percentgraph1}
\end{figure}

If we plot the percentage versus the number of normalized zeros for a fixed interval $(-\rho, \rho)$ (see Figure \ref{percentgraph1}), we see that although the bounds from the higher levels starts off a lot higher than those from lower levels when $r$ is small, they decrease faster and eventually yield better results as $r$ increases (remember the larger $r$ is, the less likely to have at least that many normalized zeros close to the central point). For instance, if we compare the bounds from level 2 to level 6 for a fixed $\rho = .4$, we see that when $r = 6$, the 2-level gives a bound of $0.111085$ (i.e., at most approximately 11\% have a zero within 40\% of the average spacing near the central point) while the 6-level yields $1.58572$ (which is a useless bound as it exceeds a hundred percent!). However, when we look at $r = 20$, we see a significantly better bound from level 6 that gives $0.0000121805$ compared to level 2 that yields $0.0042038$. Thus the higher levels yield greater decay in the probability of the unlikely case of a large number of zeros very close to the central point, far better than the results from the 2-level density.

We also see that for a fixed number of zeros, the higher level densities give better bounds for a larger interval $(-\rho, \rho)$ near the central point (see the tables in \S\ref{tablessection}). For instance, for a fixed $r = 8$, the 4-level density gives a bound of $0.0331395$ at $\rho = .5$, smaller than the 6-level which yields $0.0534808$. However, when we increase $\rho$ to be $.9$, we have the level six giving a better bound of $0.847282$ than $2.14456$ from level four. See Tables \ref{table1} and \ref{table2}.

%%%%%%%%%%%%%%%%%%%%%%%%%%%%%%%%%%%%%%%%%%%%%%%%%%%%%%%%%%%%%%%%%%%%%%%%%%%%%%%%%%%%%%%%%%%%%%%%%%%%%%%%%%%%%%%%%%%%%%%%%%%%%%%%%
%%%%%%%%%%%%%%%%%%%%%%%%%%%%%%%%%%%%%%%%%%%%%%%%%%%%%%%%%%%%%%%%%%%%%%%%%%%%%%%%%%%%%%%%%%%%%%%%%%%%%%%%%%%%%%%%%%%%%%%%%%%%%%%%%
%%%%%%%%%%%%%%%%%%%%%%%%%%%%%%%%%%%%%%%%%%%%%%%%%%%%%%%%%%%%%%%%%%%%%%%%%%%%%%%%%%%%%%%%%%%%%%%%%%%%%%%%%%%%%%%%%%%%%%%%%%%%%%%%%

\section{Preliminaries}

First, we establish the necessary notation and present the required preliminary results, starting with discussing certain properties of the Fourier Transform. As the Fourier transform of a convolution is the product of the Fourier transforms, the complex process of integration is converted to the simpler task of multiplication. However, this comes with the trade-off of needing to compute the inverse Fourier transform later on. The properties below are standard, and are collected for convenience and to identity normalizations; see \cite{SS} for details.

\begin{defi}[Fourier Transform]\label{fouriertransform} The Fourier transform of a function $\phi(x)$ is defined by
    \begin{equation}
        \mathfrak{F}(\phi(x))(y) \ := \ \widehat{\phi}(y) \ = \ \int_{-\infty}^{\infty} \phi(x)e^{-2\pi ixy}dx.
    \end{equation}
\end{defi}

A simple calculation yields the following.

\begin{lem}[Scaling, Derivative]\label{scalingderiv}
    Scaling the Fourier transform by a constant $c$ satisfies
    \begin{equation}
        \mathfrak{F}(\phi(cx))(y) \ = \ \frac{1}{|c|} \cdot \widehat\phi\left(\frac{y}{c}\right),
    \end{equation}  while for derivatives we have
    \begin{equation} \mathfrak{F}(\phi'(x))(y) \ = \ -2\pi i y \widehat\phi(y), \ \ \ {\rm and} \ \ \ \mathfrak{F}(\phi''(x))(y) \ = \  -(2\pi y)^2 \widehat\phi(y).
    \end{equation}
\end{lem}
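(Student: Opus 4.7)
The plan is to verify each of the three identities directly from the integral definition of the Fourier transform in Definition \ref{fouriertransform}, exploiting in each case only a single elementary manipulation.

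For the scaling identity, I would substitute $u = cx$ in
\begin{equation*}
  \mathfrak{F}(\phi(cx))(y) \ = \ \int_{-\infty}^{\infty} \phi(cx)\, e^{-2\pi i x y}\, dx,
\end{equation*}
so that $dx = du/c$ and $e^{-2\pi i x y} = e^{-2\pi i (u/c) y}$. When $c > 0$ the limits of integration are preserved and we obtain $(1/c)\,\widehat{\phi}(y/c)$ directly. When $c < 0$ the substitution reverses the limits, and interchanging them introduces an additional factor of $-1$; combining this with the $1/c$ from the change of variables yields $1/|c|$ in both cases, proving the scaling formula.

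For the first derivative formula I would integrate by parts, taking $u = e^{-2\pi i x y}$ and $dv = \phi'(x)\,dx$, so that $du = -2\pi i y\, e^{-2\pi i x y}\,dx$ and $v = \phi(x)$. Since the test functions of interest are Schwartz (as stipulated in the paper after Definition \ref{nleveldensity}), the boundary term $[\phi(x)\,e^{-2\pi i x y}]_{-\infty}^{\infty}$ vanishes, leaving the remaining integral as a scalar multiple of $\widehat{\phi}(y)$ with the factor stated in the lemma. The only subtlety worth verifying is the justification of the boundary vanishing, which follows immediately from the Schwartz decay of $\phi$ (much more is available than is needed).

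For the second derivative identity I would simply iterate: write $\phi'' = (\phi')'$, apply the first derivative rule twice, and multiply the two factors of $-2\pi i y$ to obtain $(-2\pi i y)^2 \widehat{\phi}(y) = -(2\pi y)^2 \widehat{\phi}(y)$. Alternatively one can integrate by parts twice directly, picking up one boundary term involving $\phi'(x)\, e^{-2\pi i x y}$ and another involving $\phi(x)\, e^{-2\pi i x y}$, both of which vanish by Schwartz decay on $\phi$ and $\phi'$. No step here is a real obstacle; the only care needed throughout is the handling of boundary terms and the sign coming from $|c|$ versus $c$ in the scaling computation.
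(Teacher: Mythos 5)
The paper gives no proof of Lemma~\ref{scalingderiv} (it is dismissed as ``a simple calculation''), so there is no paper proof to compare against; your plan of attack (substitution for the scaling law, integration by parts for the derivative laws) is the standard and correct one, and the scaling argument and the second-derivative formula come out right. However, there is a sign slip in the first-derivative step that you have inherited from the statement itself without noticing.

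With the paper's convention $\widehat{\phi}(y) = \int \phi(x) e^{-2\pi i x y}\,dx$, integrating by parts with $u = e^{-2\pi i x y}$ and $dv = \phi'(x)\,dx$ gives $du = -2\pi i y\, e^{-2\pi i x y}\,dx$, as you say, but the integration-by-parts identity $\int u\,dv = uv - \int v\,du$ contributes a \emph{second} minus sign in front of $\int v\,du$. After the boundary term vanishes, the two minus signs cancel and the result is
\begin{equation*}
\mathfrak{F}(\phi')(y) \ = \ -\int_{-\infty}^{\infty} \phi(x)\,\bigl(-2\pi i y\bigr)\, e^{-2\pi i x y}\,dx \ = \ +\,2\pi i y\, \widehat{\phi}(y),
\end{equation*}
not $-2\pi i y\, \widehat{\phi}(y)$ as claimed in the lemma and reproduced in your argument. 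Your final sentence (``leaving the remaining integral as a scalar multiple of $\widehat{\phi}(y)$ with the factor stated in the lemma'') glosses over exactly the step where the cancellation happens, which is how the sign error slipped through. Fortunately the error is self-cancelling at second order: squaring either $\pm 2\pi i y$ yields $-(2\pi y)^2$, so the second-derivative formula, which is the only one the paper actually uses (in the construction $\phi_\omega(x) = \widehat{g}(x)(1 - (x/\omega)^2)$ from $\widehat{\phi}_\omega = g + (2\pi\omega)^{-2}g''$), is correct. You should flag the incorrect sign in the first-derivative formula rather than confirm it, while noting that it does not propagate to the second-derivative statement or to any downstream result.
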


\begin{defi}[Convolution]\label{defi:convolutionthm}
The convolution of $a(x)$ and $b(x)$ is
    \begin{equation}
        (a \ast b)(x)\ :=\ \int_{-\infty}^{\infty}a(t)b(x-t)dt.
    \end{equation} Note if $a$ and $b$ are functions supported in $(-s, s)$ then $a \ast b$ is supported in $(-2s, 2s)$.
\end{defi}

The following theorem converts convolutions to easier quantities to study.

\begin{thm}[Convolution Theorem]\label{thm:convolutionthm}
For functions $a(x)$ and $b(x)$,
\begin{equation}
    \mathfrak{F}(a \ast b) \ = \ \mathfrak{F}(a) \mathfrak{F}(b).
\end{equation}
\end{thm}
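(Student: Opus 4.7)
The plan is a direct computation: unfold the definition of $\mathfrak{F}(a \ast b)$, swap the order of integration via Fubini, substitute to decouple the inner variable from the outer one, and then recognize the result as a product of two Fourier integrals. The only real subtlety is justifying the interchange, which for the classes of test functions used in this paper (Schwartz, or at minimum $L^{1}$) is automatic, so I would mention the hypothesis but not dwell on it.

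Concretely, I would begin from
\begin{equation*}
\mathfrak{F}(a \ast b)(y) \ = \ \int_{-\infty}^{\infty} (a \ast b)(x)\, e^{-2\pi i x y} \, dx \ = \ \int_{-\infty}^{\infty} \int_{-\infty}^{\infty} a(t)\, b(x-t)\, e^{-2\pi i x y} \, dt\, dx,
\end{equation*}
using Definitions \ref{fouriertransform} and \ref{defi:convolutionthm}. Since $a$ and $b$ are Schwartz (or just absolutely integrable), the double integral is absolutely convergent, so Fubini's theorem lets me exchange the order of integration:
\begin{equation*}
\mathfrak{F}(a \ast b)(y) \ = \ \int_{-\infty}^{\infty} a(t) \left[ \int_{-\infty}^{\infty} b(x-t)\, e^{-2\pi i x y} \, dx \right] dt.
\end{equation*}

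Next I would perform the change of variables $u = x - t$ in the inner integral (so $dx = du$ and $x = u + t$), giving
\begin{equation*}
\int_{-\infty}^{\infty} b(x-t)\, e^{-2\pi i x y} \, dx \ = \ \int_{-\infty}^{\infty} b(u)\, e^{-2\pi i (u+t) y} \, du \ = \ e^{-2\pi i t y}\, \widehat{b}(y),
\end{equation*}
where I have factored $e^{-2\pi i (u+t) y} = e^{-2\pi i t y}\, e^{-2\pi i u y}$ and pulled $e^{-2\pi i t y}$ out as it is constant in $u$. Substituting back yields
\begin{equation*}
\mathfrak{F}(a \ast b)(y) \ = \ \widehat{b}(y) \int_{-\infty}^{\infty} a(t)\, e^{-2\pi i t y} \, dt \ = \ \widehat{a}(y)\, \widehat{b}(y) \ = \ \mathfrak{F}(a)(y)\, \mathfrak{F}(b)(y),
\end{equation*}
which is the claim.

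The argument above is essentially a one-line calculation once Fubini is invoked, and the only ``obstacle'' is the regularity assumption needed to justify the interchange of integrals. In the applications of the paper, $a$ and $b$ are taken to be compactly supported and smooth (indeed, the convolution $g = f \ast f$ in Theorem \ref{oddmainresult} is built from such an $f$), so absolute integrability is immediate and no further care is required.
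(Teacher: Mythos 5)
Your proof is correct and is the standard argument (unfold the definitions, invoke Fubini, substitute $u = x - t$, factor the exponential). The paper does not actually prove this theorem — it is stated as a known fact from Fourier analysis with a pointer to Stein--Shakarchi — so there is no in-paper proof to compare against, but your computation is exactly the textbook derivation and your remark about absolute integrability (automatic here since the test functions are Schwartz, indeed smooth and compactly supported on the Fourier side) is the right justification for the interchange.
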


The next lemma shows a nice interplay between convolution and differentiation.

\begin{lem}\label{fourierderivative} The double derivative of the convolution between two functions $a(x)$ and $b(x)$ is
\begin{equation}
    (a \ast b)''(x) \ = \ (a \ast b'')(x).
\end{equation}
\end{lem}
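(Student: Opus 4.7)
The plan is to deduce the identity $(a \ast b)''(x) = (a \ast b'')(x)$ directly from the two Fourier transform tools just established, namely Lemma \ref{scalingderiv} (the derivative rule $\mathfrak{F}(\phi'')(y) = -(2\pi y)^2 \widehat{\phi}(y)$) and Theorem \ref{thm:convolutionthm} (the Convolution Theorem $\mathfrak{F}(a \ast b) = \mathfrak{F}(a)\mathfrak{F}(b)$). Since the test functions relevant to the paper are Schwartz (or at least smooth and compactly supported), Fourier inversion is unambiguous, so it suffices to check that both sides have the same Fourier transform.

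First I would take the Fourier transform of the left-hand side. Applying the derivative formula from Lemma \ref{scalingderiv} to the function $a \ast b$ gives
\begin{equation*}
\mathfrak{F}\bigl((a \ast b)''\bigr)(y) \ = \ -(2\pi y)^2 \, \mathfrak{F}(a \ast b)(y),
\end{equation*}
and then Theorem \ref{thm:convolutionthm} rewrites this as $-(2\pi y)^2 \, \widehat{a}(y)\widehat{b}(y)$. Next I would take the Fourier transform of the right-hand side: Theorem \ref{thm:convolutionthm} gives $\mathfrak{F}(a \ast b'')(y) = \widehat{a}(y) \, \mathfrak{F}(b'')(y)$, and applying Lemma \ref{scalingderiv} to $b$ converts this into $\widehat{a}(y) \cdot (-(2\pi y)^2) \widehat{b}(y) = -(2\pi y)^2 \, \widehat{a}(y)\widehat{b}(y)$. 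The two Fourier transforms agree pointwise, so Fourier inversion yields $(a \ast b)''(x) = (a \ast b'')(x)$.

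There is essentially no obstacle, as this is a bookkeeping exercise combining two identities already in hand; the only subtlety is ensuring enough regularity to move derivatives through the integral, which amounts to assuming $b$ is twice continuously differentiable (as is already arranged in the applications). As an alternative one could avoid Fourier transforms altogether and differentiate $\int_{-\infty}^{\infty} a(t) b(x-t)\,dt$ twice under the integral sign in the variable $x$, transferring both derivatives onto the $b(x-t)$ factor; but the Fourier-based argument is cleaner and directly leverages the machinery just set up in Lemma \ref{scalingderiv} and Theorem \ref{thm:convolutionthm}.
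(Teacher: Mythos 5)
Your proof is correct. The paper states this lemma without proof, treating it as a standard fact, so there is no argument in the paper to compare against. Both of your suggested routes are valid: the Fourier route is sound (for Schwartz functions Fourier inversion is available, and both sides transform to $-(2\pi y)^2\,\widehat{a}(y)\widehat{b}(y)$ by combining Lemma \ref{scalingderiv} with Theorem \ref{thm:convolutionthm}), and the more elementary route you mention in passing, namely writing $(a\ast b)(x) = \int a(t)\,b(x-t)\,dt$ and differentiating twice under the integral sign in $x$ so that both derivatives fall on $b(x-t)$, is the one usually given and requires less machinery (just enough smoothness and decay to justify differentiation under the integral, which the paper's test functions certainly have). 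Either is a complete justification of the lemma.
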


We introduce the Fourier transform pair of the sinc function and the rectangular function.

\begin{defi}[Sinc function]\label{sinc}
    The normalized sinc function is
    \begin{equation}
        \sinc x \ := \
        \begin{cases}
            \frac{\sin \pi x}{\pi x} & \emph{if } x \neq 0 \\
            1 & \emph{if } x = 0.
        \end{cases}
    \end{equation}
\end{defi}

\begin{lem}[Rectangular function]\label{rectangular}
    The rectangular function, also known as the normalized boxcar function, is
    \begin{equation}
        \rect\left(\frac{x}{a}\right) \ := \
        \begin{cases}
            0 & \emph{if } |x| > \frac{a}{2} \\
            \frac{1}{2} & \emph{if } |x| = \frac{a}{2}\\
            1 & \emph{if } |x| < \frac{a}{2}.
        \end{cases}
    \end{equation}
\end{lem}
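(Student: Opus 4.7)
The statement labeled as Lemma \ref{rectangular} is, despite its label, not a proposition with mathematical content to be established: it is a definition. The notation $:=$ in the display makes this explicit, and the body of the ``lemma'' simply stipulates the piecewise values assigned to the symbol $\rect(x/a)$ at the three regimes $|x|<a/2$, $|x|=a/2$, and $|x|>a/2$. No hypothesis is asserted, no quantifier ranges over anything, and no prior definitions are being combined to derive a new fact. Consequently, there is no theorem here to prove, and any ``proof'' would amount to a tautology of the form ``by definition.''

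The plan, therefore, is to acknowledge this and to note what, if anything, one could reasonably package as a proof. The only genuinely verifiable content one could attach is a well-definedness check: namely, that the three cases in the piecewise formula are mutually exclusive and exhaustive for each real $x$ and each $a>0$, so that $\rect(x/a)$ is a single-valued function $\mathbb{R}\to\{0,\tfrac12,1\}$. This is immediate from the trichotomy of the real line under the comparison of $|x|$ with $a/2$: exactly one of $|x|<a/2$, $|x|=a/2$, $|x|>a/2$ holds. If desired, one could also remark that the function is even, i.e., $\rect(x/a)=\rect(-x/a)$, since $|x|=|-x|$; and that $\rect(\cdot/a)$ is the indicator $\mathbf{1}_{(-a/2,a/2)}$ almost everywhere, which is the only property used in any $L^p$ or Fourier-theoretic computation later in the paper.

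My proposal for how to present this in the text, then, is not to write a proof block at all, but to reformulate the statement as \textbf{Definition} rather than \textbf{Lemma} (adjusting the theorem environment accordingly), keeping the piecewise display unchanged. If the authors insist on retaining a lemma, the single sentence ``The three cases in the piecewise definition are pairwise disjoint and cover $\mathbb{R}$, so $\rect(x/a)$ is a well-defined function of $x\in\mathbb{R}$ for each $a>0$'' suffices as a proof, and the main obstacle is purely editorial: deciding whether to re-label the environment or to leave a one-line well-definedness remark in place of a substantive argument. In either case there is no mathematical step whose execution I expect to be difficult, because the statement simply records notation that will be paired with Definition \ref{sinc} to invoke the standard Fourier pair $\mathfrak{F}(\rect(x/a))(y)=a\,\sinc(ay)$ in subsequent computations.
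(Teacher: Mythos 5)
Your reading is correct: the paper gives no proof of this statement because, despite the \texttt{lem} environment, it is a definition (note the $:=$), exactly parallel to Definition \ref{sinc} immediately preceding it, and the paper treats it as such. Your assessment matches the paper's own (non-)treatment, and the suggested relabeling or one-line well-definedness remark is all that could be said.
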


\begin{lem}\label{sincfourier}
    The sinc function and the rectangular function are the Fourier transform of each other:
    \begin{equation}
        \mathfrak{F}(\sinc(x)) \ = \ \int_{-\infty}^{\infty} \sinc(x)e^{-2\pi ixy}dx \ = \ \rect(x)
    \end{equation}
    and
    \begin{equation}
        \mathfrak{F}\left(\rect\left(\frac{x}{a}\right)\right) \ = \ \int_{-\infty}^{\infty} \rect\left(\frac{x}{a}\right)e^{-2\pi ixy}dx \ = \ a\sinc(a x).
    \end{equation}
\end{lem}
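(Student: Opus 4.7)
The plan is to first establish the Fourier transform of the rectangular function by direct computation, then deduce the transform of $\sinc$ via Fourier inversion. For the rectangular function, since $\rect(x/a)$ is supported on $[-a/2, a/2]$ and equals $1$ in the interior (the boundary being a null set), I would compute
\begin{equation*}
\mathfrak{F}(\rect(x/a))(y) \ = \ \int_{-a/2}^{a/2} e^{-2\pi i x y}\,dx \ = \ \frac{e^{-\pi i a y} - e^{\pi i a y}}{-2\pi i y} \ = \ \frac{\sin(\pi a y)}{\pi y} \ = \ a\sinc(ay),
\end{equation*}
handling the removable singularity at $y = 0$ by continuity (the original integral equals $a$ there). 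Setting $a = 1$ produces the companion identity $\mathfrak{F}(\rect)(y) = \sinc(y)$, which is what we need to invert in the next step.

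For the transform of $\sinc$, I would invoke Fourier inversion: since $\rect \in L^1(\mathbb{R})$ and its transform $\sinc$ is bounded and piecewise smooth, inversion at points of continuity of $\rect$ yields
\begin{equation*}
\rect(y) \ = \ \int_{-\infty}^{\infty} \sinc(x) e^{2\pi i x y}\,dx.
\end{equation*}
Because both $\sinc$ and $\rect$ are even, the exponential $e^{2\pi i x y}$ may be replaced by $e^{-2\pi i x y}$ without changing the value of the integral (the odd imaginary part integrates to zero). This gives $\mathfrak{F}(\sinc)(y) = \rect(y)$ at every $y \neq \pm 1/2$, with the value $1/2$ at $y = \pm 1/2$ (a measure-zero exception irrelevant for the applications in this paper).

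The main obstacle is that $\sinc \notin L^1(\mathbb{R})$, so its Fourier integral converges only as an improper Riemann integral and the inversion theorem must be applied with care. The cleanest workaround is to introduce a regularizer such as $e^{-\varepsilon x^2}$, evaluate the resulting absolutely convergent integral (for instance by writing $\sinc(x) = (e^{i\pi x} - e^{-i\pi x})/(2\pi i x)$ and using contour shifts to a line $\Im x = \pm c$), and pass to the limit $\varepsilon \to 0^+$, using dominated convergence on the spectral side together with the known continuity of $\rect$ away from $\pm 1/2$. Once this regularization step is justified, the rest of the argument is a routine unwinding of definitions and symmetries.
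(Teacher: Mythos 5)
The paper does not actually prove this lemma; it is stated as a standard fact from Fourier analysis, with \cite{SS} given as the general reference a paragraph earlier in \S2. So there is no ``paper proof'' to compare against. Your argument is correct and is essentially the textbook derivation: the computation of $\mathfrak{F}(\rect(\cdot/a))$ is a one-line antiderivative calculation, and the second identity follows by inversion together with evenness. (You also implicitly fix a typo in the statement, which writes $\rect(x)$ where $\rect(y)$ is meant.)

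One point worth flagging: the regularization machinery you describe in the final paragraph is more than the paper needs. You correctly observe that $\sinc \notin L^1$, so pointwise Fourier inversion requires care. However, the paper only invokes this lemma inside an application of Plancherel's theorem (Theorem \ref{Plancherel}), which is an $L^2$ statement. Since $\rect$ and $\sinc$ both lie in $L^2(\mathbb{R})$ and the Fourier transform extends to a unitary operator on $L^2$, the identity $\mathfrak{F}(\rect) = \sinc$ from your first computation, together with the facts that $\mathfrak{F}^2 f(x) = f(-x)$ and that both functions are even, immediately gives $\mathfrak{F}(\sinc) = \rect$ as an equality in $L^2$ --- which is exactly the sense in which it is used. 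The Gaussian regularizer and contour-shift argument do establish the pointwise statement away from $y = \pm 1/2$, and that is a valid and complete route, but it is heavier than necessary here; the $L^2$ unitarity argument is both shorter and better matched to the downstream application.
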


Plancherel's theorem is sometimes useful when integrating a product of two functions, converting the integral of a product to the integral of the product of their Fourier transforms. We use this on $L^2(\mathbb{R})$, which are functions whose integral of the square of their absolute value is finite; as our test functions are bounded and decay rapidly, they are all in this space.

\begin{thm}[Plancherel theorem]\label{Plancherel}
    For two $L^2(\mathbb{R})$ functions $a(x)$ and $b(x)$,
    \begin{equation}
        \int_{-\infty}^{\infty} a(x)\overline{b(x)}dx \ = \ \int_{-\infty}^{\infty} \mathfrak{F}((a(x)) \overline{\mathfrak{F}(b(x))} dx.
    \end{equation}
\end{thm}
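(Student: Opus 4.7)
The plan is to first prove the identity for Schwartz functions, where the Fourier inversion formula is available without technical difficulty, and then extend to all of $L^2(\mathbb{R})$ by a density argument. This two-step strategy is essentially forced by the fact that an $L^2$ function need not be integrable, so the defining integral $\widehat{a}(y) = \int a(x) e^{-2\pi i xy}\,dx$ does not converge pointwise in general, and the Fourier transform on $L^2$ must be defined as an extension.

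For the Schwartz case, I would apply the Fourier inversion formula $b(x) = \int_{-\infty}^{\infty} \widehat{b}(y) e^{2\pi i xy}\,dy$, take complex conjugates to obtain $\overline{b(x)} = \int \overline{\widehat{b}(y)}\, e^{-2\pi i xy}\,dy$, substitute into the left-hand side of the claim, and swap the order of integration via Fubini. Absolute convergence of the resulting double integral is immediate from the rapid decay of $a$ and $\widehat{b}$ (both Schwartz). The inner $x$-integral is then exactly $\widehat{a}(y)$, producing the right-hand side.

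For the extension to $L^2$, the special case $a=b$ of the Schwartz identity is Parseval's formula, which shows that $\mathfrak{F}$ is an isometry on the Schwartz space. Since Schwartz functions are dense in $L^2(\mathbb{R})$, $\mathfrak{F}$ extends uniquely to an isometry on all of $L^2(\mathbb{R})$. Choosing Schwartz sequences $a_n\to a$ and $b_n\to b$ in $L^2$, their transforms converge in $L^2$ to $\widehat{a}$ and $\widehat{b}$. Both sides of the identity, viewed as bilinear pairings on $L^2\times L^2$, are jointly continuous by Cauchy--Schwarz, so one may pass to the limit on both sides of the Schwartz-case identity to deduce the general result.

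The main obstacle is the Fourier inversion formula on the Schwartz space, which must be established before any of the above can proceed. The standard route is a Gaussian approximation: insert a mollifier $e^{-\pi\epsilon x^2}$ inside the inversion integral, use the fact that Gaussians are their own Fourier transforms (along with the dilation property of Lemma \ref{scalingderiv}) to evaluate explicitly, and send $\epsilon\to 0^+$, with the limit justified by dominated convergence. Once inversion is in hand, every remaining step is a routine Fubini swap plus a density/continuity argument, so the genuinely substantive analytic content sits entirely in that preliminary step.
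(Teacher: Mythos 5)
The paper does not actually prove this statement: Plancherel's theorem is recorded in the Preliminaries section as a classical fact, with the reader referred to \cite{SS} for details. So there is no in-paper argument against which to compare your proposal.

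That said, your proof is correct, complete at the level of a sketch, and is exactly the standard treatment (and in particular the one in the cited reference). You correctly identify the structure that is forced on the argument: the Fourier transform of a general $L^2$ function is not given by an absolutely convergent integral, so one must first prove the identity on a dense subspace where everything converges pointwise --- Schwartz functions --- and then extend by continuity. The Schwartz-space step (Fourier inversion applied to $b$, conjugation, substitution, Fubini) is airtight given rapid decay, and your density argument correctly isolates the $a=b$ Parseval case as the source of the isometry estimate that makes $\mathfrak{F}$ extend continuously to $L^2$ and the bilinear pairing pass to the limit via Cauchy--Schwarz. You also rightly flag that the only genuinely substantive analytic content is Fourier inversion on the Schwartz space, and your Gaussian-mollifier route to it (using the self-reciprocity of the Gaussian and the dilation rule of Lemma \ref{scalingderiv}) is the standard one. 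Nothing is missing and no step would fail.
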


In our analysis we have equalities where the left hand side involves sums over zeros, while the right hand side are sums of integrals whose integrands involve products of quantities related to our test functions and the $n$-level density kernels. As these integrals are impossible to evaluate in closed form for our test functions, we use Riemann Sums to approximate the integrals; by choosing the step size sufficiently small we can ensure that the required number of digits of our answer are correct.

\begin{thm}[Riemann Sum] Let $f : [a,b] \to \mathbb{R}$ be a function defined on a closed interval $[a,b]$ and $a, b \in \mathbb{R}$. Let $P = (x_0, x_1, \ldots, x_n)$ be the partition of $[a, b]$ such that
\begin{equation}
    a \ := \ x_0\ < \ x_1\  < \ \cdots\ <\ x_n\ =:\ b.
\end{equation}
The Riemann sum $S$ of $f$ over $[a, b]$ with partition $P$ is
\begin{equation}\label{riemannsum}
    S(f;P) \ := \ \sum_{k = 1}^n f(x_k^\ast)\Delta x_k,
\end{equation}
where $\Delta x_k := x_k - x_{k-1}$ and  $x_k^\ast \in [x_{k-1}, x_k]$. The sum converges to the integral of $f$ over $[a, b]$ as $\max_k \Delta x_k$ approaches zero.
\end{thm}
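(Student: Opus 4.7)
The plan is to follow the standard Darboux approach to the Riemann integral. The statement as written implicitly requires $f$ to be Riemann integrable on $[a,b]$; in the paper's applications $f$ is continuous (indeed Schwartz), so integrability and uniform continuity on the compact interval $[a,b]$ are automatic from the Heine--Cantor theorem, and I would state the argument first under continuity and then indicate how to extend it.

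First I would introduce the Darboux upper and lower sums
\begin{equation}
U(f;P) \ := \ \sum_{k=1}^n M_k \Delta x_k, \qquad L(f;P) \ := \ \sum_{k=1}^n m_k \Delta x_k,
\end{equation}
where $M_k = \sup_{[x_{k-1},x_k]} f$ and $m_k = \inf_{[x_{k-1},x_k]} f$. Because any sample point $x_k^\ast \in [x_{k-1},x_k]$ satisfies $m_k \le f(x_k^\ast) \le M_k$, one obtains the sandwich
\begin{equation}
L(f;P) \ \le \ S(f;P) \ \le \ U(f;P).
\end{equation}
Riemann integrability of $f$ means the common value $I := \inf_P U(f;P) = \sup_P L(f;P)$ exists and equals $\int_a^b f(x)\,dx$, and this value also lies between $L(f;P)$ and $U(f;P)$ for every $P$. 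Subtracting therefore gives
\begin{equation}
|S(f;P) - I| \ \le \ U(f;P) - L(f;P),
\end{equation}
so the theorem reduces to showing that $U(f;P) - L(f;P) \to 0$ as the mesh $\max_k \Delta x_k$ tends to zero.

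For the mesh-based control, under continuity the cleanest route is uniform continuity: given $\varepsilon > 0$, choose $\delta > 0$ such that $|f(x)-f(y)| < \varepsilon/(b-a)$ whenever $|x-y| < \delta$. Then any partition with $\max_k \Delta x_k < \delta$ satisfies $M_k - m_k < \varepsilon/(b-a)$, hence $U(f;P) - L(f;P) < \varepsilon$, and the sandwich forces $S(f;P) \to I$. For a merely Riemann integrable $f$, the Darboux criterion first supplies a fixed partition $P_\varepsilon$ with $U(f;P_\varepsilon) - L(f;P_\varepsilon) < \varepsilon/2$, and one then shows that for any partition $P$ with sufficiently small mesh, the oscillation of $f$ on the at most finitely many subintervals of $P$ that straddle points of $P_\varepsilon$ contributes at most $2\|f\|_\infty \cdot (\#P_\varepsilon) \cdot \max_k \Delta x_k$, which is $<\varepsilon/2$ once the mesh is small enough. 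The main technical obstacle is exactly this refinement bookkeeping in the general integrable case; in the continuous case relevant to this paper the whole argument collapses to a one-line application of uniform continuity, which is why we shall invoke the theorem freely for the smooth integrands that arise in our Fourier-analytic computations.
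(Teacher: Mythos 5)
The paper provides no proof of this statement at all; it is recorded purely as background (essentially a definition plus a recollection of the standard convergence fact from real analysis) to justify the numerical evaluation of the multi-dimensional integrals that appear later, so there is nothing in the paper to compare your argument against. Your proof is correct and is the standard Darboux argument: sandwich the Riemann sum between the upper and lower Darboux sums, observe that the integral lies in the same interval, and then drive $U(f;P)-L(f;P)$ to zero either by uniform continuity (in the continuous case) or by the refinement-plus-oscillation bookkeeping (in the general Riemann-integrable case). You are also right to flag that the statement as written tacitly assumes $f$ is Riemann integrable; without that hypothesis the limit need not exist, and in the paper's applications $f$ is smooth so this is automatic. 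The only thing worth adding is a citation to a standard reference (e.g.\ Rudin's \emph{Principles of Mathematical Analysis} or Stein--Shakarchi) rather than a from-scratch proof, since the paper itself treats this as a known fact.
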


Many of our terms involve combinatorial factors, including the double factorial.

\begin{defi}[Double Factorial] The double factorial of $n$ is the product of all integers up to $n$ that shares the same parity (odd or even):
\begin{equation}
    n!! \ := \ \prod_{k=0}^{\lceil \frac{n}{2} \rceil - 1} (n-2k) \ = \ n(n-2)(n-4)(n-6)\cdots.
\end{equation}
\end{defi}

\begin{defi} Let $\mathcal{F}_{N, r}^{(\rho)}$ denote the family of forms in $\mathcal{F}_N$ that have at least $r$ zeros within a distance $\rho$. For a fixed $r$, as $N$ tends to infinity through the primes, define $P_{r,\rho}(\mathcal{F})$ to be the limit of the percentage of forms in $\mathcal{F_N}$ that have at least $r$ zeros within a distance $\rho$:
\begin{equation}\label{percent}
    P_{r,\rho}(\mathcal{F}) \ := \ \lim_{N \to \infty} \frac{|\mathcal{F}_{N, r}^{(\rho)}|}{|\mathcal{F}_N|}.
\end{equation}
\end{defi}

%%%%%%%%%%%%%%%%%%%%%%%%%%%%%%%%%%%%%%%%%%%%%%%%%%%%%%%%%%%%%%%%%%%%%%%%%%%%%%%%%%%%%%%%%%%%%%%%%%%%%%%%%%%%%%%%%%%%%%%%%%%%%%%%%
%%%%%%%%%%%%%%%%%%%%%%%%%%%%%%%%%%%%%%%%%%%%%%%%%%%%%%%%%%%%%%%%%%%%%%%%%%%%%%%%%%%%%%%%%%%%%%%%%%%%%%%%%%%%%%%%%%%%%%%%%%%%%%%%%
%%%%%%%%%%%%%%%%%%%%%%%%%%%%%%%%%%%%%%%%%%%%%%%%%%%%%%%%%%%%%%%%%%%%%%%%%%%%%%%%%%%%%%%%%%%%%%%%%%%%%%%%%%%%%%%%%%%%%%%%%%%%%%%%%

\section{Proof of Theorem \ref{oddmainresult}}

%%%%%%%%%%%%%%%%%%%%%%%%%%%%%%%%%%%%%%%%%%%%%%%%%%%%
%%%%%%%%%%%%%%%%%%%%%%%%%%%%%%%%%%%%%%%%%%%%%%%%%%%%
\subsection{Construction of test function}\label{construction}
We first outline the construction of the test function $\phi_{\omega}$ that meets the required conditions (we adopt a similar process from \cite{GM} and \cite{HR}).

We start on the Fourier transform side and construct a $\widehat{\phi}(y)$ that has a finite support $(-\frac{\sigma}{n}, \frac{\sigma}{n})$, since it is not easy to find a function that satisfies all the conditions for $\phi$ directly. Let $h$ be an even function that is at least twice continuously differentiable and supported in $[-1, 1]$, and it monotonically decreasing from $0$ to $1$. For a fixed $\omega$ and $\sigma$, let $f(y) := h(\frac{2yn}{\sigma})$ and $g(y) := (f \ast f)(y)$ (the convolution of $f$ with itself \eqref{defi:convolutionthm}); note $\widehat{g}(x) = \widehat{f}(x) \cdot \widehat{f}(x) \ge 0$, and thus $\widehat{g}(x) \ge 0$. Let $\widehat{\phi}_{\omega}(y)$ equal $g(y) + (2 \pi \omega)^{-2} g''(y)$ (where $\widehat{\phi}_{\omega}(y)$ is the Fourier transform of $\phi_{\omega}(x)$).

We show that $\supp\widehat{\phi}_{\omega}(y) \subset (-\sigma/n, \sigma/n)$. Notice, since $\supp(f) \subset (-\sigma/2n, \sigma/2n)$ and $g = f \ast f$, $\supp(g) \subset (-\sigma/n, \sigma/n)$. Since $\widehat{\phi}_{\omega}(y) = g(y) + (2 \pi \omega)^{-2} g''(y)$ and the support of $g''$ is contained in that of $g$, the support of $\widehat{\phi}_{\omega}(y)$ is contained in $(-\frac{\sigma}{n}, \frac{\sigma}{n})$, as desired.

We find $\phi_{\omega}(x)$ by taking the Fourier transform\footnote{A useful property of the Fourier transform is $\mathfrak{F}(\mathfrak{F}(\phi(x))) = \phi(-x)$. Since $\phi$ is even, we can find the original function using this property directly.} of $\widehat{\phi}_{\omega}(y)$. Notice the Fourier transform of $g''(y)$ is $-(2\pi y)^2 \widehat{g}(y)$ and $g'' = f \ast f''$. Combining the above, we have the Fourier transform of $\widehat{\phi}_{\omega}(y) = g(y) + (2 \pi \omega)^{-2} g''(y)$ is $\phi_{\omega}(x) = \widehat{g}(x)\cdot(1 - (x / \omega)^2)$, shown in Figure \ref{phigraph}.

\begin{figure}[ht]
    \centering
    \includegraphics[width=10cm]{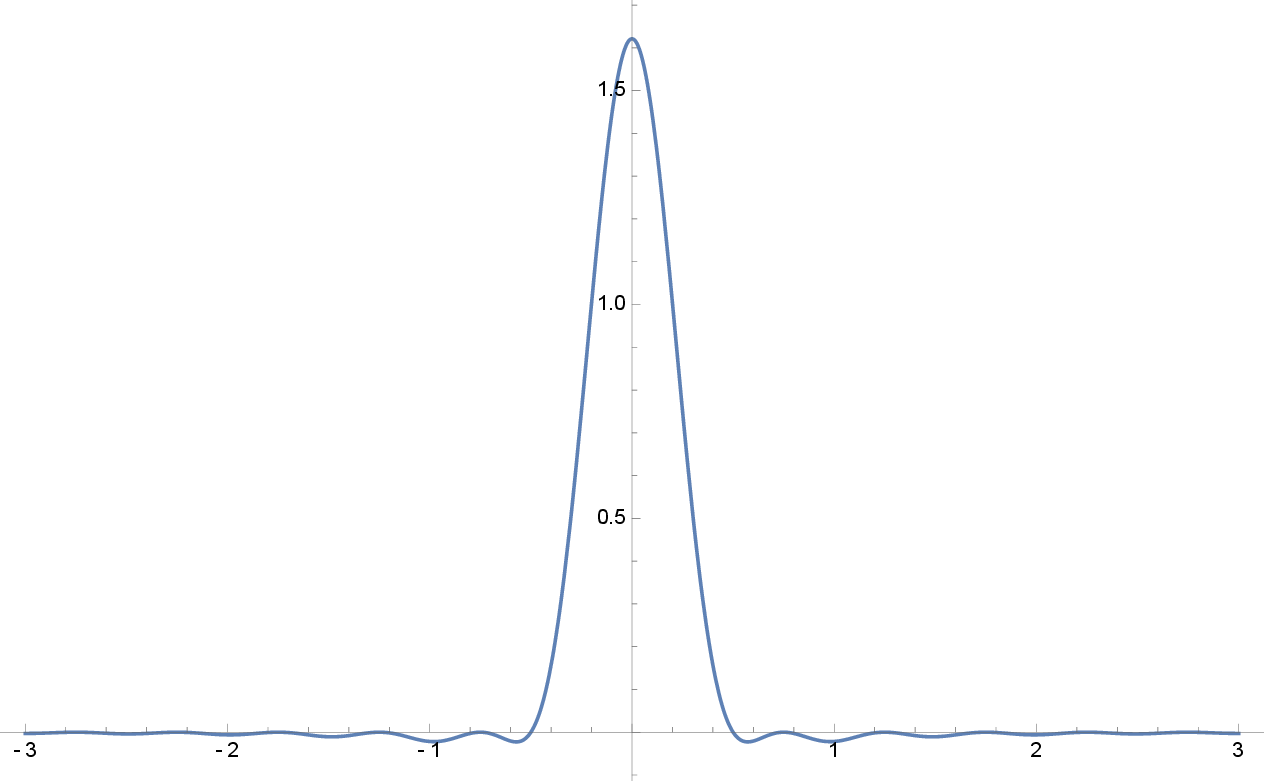}
    \caption{Plot of $\phi_{\omega}(x) = \widehat{g}(x)\cdot(1 - (x / \omega)^2)$, for $h = \cos{\left(\frac{\pi y}{2}\right)}$, $\sigma = 2$, $\omega = .5$, and $n = 1$.}
    \label{phigraph}
\end{figure}

\begin{rem}
    Following the framework of constructing the test function above, one can adopt more elaborate constructions, such as adding a quartic term in $\phi$, to obtain potentially better test functions that yield better results, but at the cost of more involved computations.
\end{rem}

%%%%%%%%%%%%%%%%%%%%%%%%%%%%%%%%%%%%%%%%%%%%%%%%%%%%
%%%%%%%%%%%%%%%%%%%%%%%%%%%%%%%%%%%%%%%%%%%%%%%%%%%%
\subsection{Proof of Theorem \ref{oddmainresult}} We use odd centered moments to obtain a lower bound on the number of normalized zeros within a small distance $\omega$ from the central point. Then we prove the theorem by showing a contradiction if there are no zeros in the interval $(-\omega, \omega)$.

As remarked in the introduction, there is a long history of results on the $n$-level densities and the combinatorially easier centered moments. The first results are due to Iwaniec-Luo-Sarnak \cite{ILS}, who computed the $1$-level density with support up to $2$. Their result  was later extended by Hughes-Miller \cite{HM} to the $n$-level densities and centered moments with support up to $1/(n-1)$. Recently there was a breakthrough by Cohen et. al. \cite{C--}, who were finally able to handle the combinatorics and prove the $n$-level densities and centered moments agree with random matrix theory for support up to $2/n$; see Theorem \ref{bound}. We extract the following useful consequence of their work.

\begin{cor} We can write \eqref{RHSlimit} as
   \begin{equation}\label{threelevel}
       \lim_{\substack{N\to\infty \\ N \text{\rm prime}}} \frac{1}{|\mathcal{F}_{N}|}\sum_{f \in \mathcal{F}_{N}} \left( \sum_{j}\phi(\widetilde{\gamma}_{f, j}c_{n}) -  \mu(\phi, \mathcal{F}) \right)^{n} \ = \ 1_{n \ \rm even}(n-1)!! \sigma_{\phi}^n \pm S(n, a; \phi),
   \end{equation}
    where $\mu(\phi, \mathcal{F})$ is the mean of the family of cuspidal newforms.
   \begin{proof}
   	    The difference between the mean of the family in the limit and the mean when the level is $N$ is negligible; this follows from an application of the Binomial Theorem and the Cauchy-Schwartz inequality. We can therefore simplify the calculations and replace $\mu(\phi, \mathcal{F}_{N})$ with $\mu(\phi, \mathcal{F})$. For a complete proof see \cite{DM}.
   \end{proof}
\end{cor}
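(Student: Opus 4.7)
Write $D(f;\phi) := \sum_j \phi(\widetilde{\gamma}_{f,j} c_n)$ for the linear statistic of the zeros, $\mu_N := \mu(\phi,\mathcal{F}_N) = \langle D(f;\phi)\rangle_N$ for the finite-family mean, and $\mu := \mu(\phi,\mathcal{F})$ for its limit as $N \to \infty$ through the primes. Theorem \ref{bound} already delivers the asserted identity when one centers by $\mu_N$, so the task is to show that replacing $\mu_N$ by $\mu$ inside the $n$-th power does not affect the limit. Set $\delta_N := \mu - \mu_N$; the first step is to record that $\delta_N \to 0$, which follows from the explicit formula together with the Petersson trace formula used to average over $\mathcal{F}_N$: the finite-$N$ mean matches the random-matrix prediction up to arithmetic terms of size $O(1/\log N)$.

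The second step is a binomial expansion. Writing $D - \mu = (D-\mu_N) - \delta_N$ and applying the Binomial Theorem,
\begin{equation}
\bigl(D(f;\phi) - \mu\bigr)^{n} \;=\; \sum_{k=0}^{n} \binom{n}{k}(-\delta_N)^{k}\bigl(D(f;\phi) - \mu_N\bigr)^{n-k},
\end{equation}
and averaging term-by-term over $\mathcal{F}_N$ gives
\begin{equation}
\bigl\langle (D-\mu)^{n}\bigr\rangle_{N} \;=\; \bigl\langle (D-\mu_N)^{n}\bigr\rangle_{N} \;+\; \sum_{k=1}^{n}\binom{n}{k}(-\delta_N)^{k}\bigl\langle (D-\mu_N)^{n-k}\bigr\rangle_{N}.
\end{equation}
It then suffices to show that each summand with $k\ge 1$ vanishes in the limit; since $\delta_N \to 0$, this reduces to a uniform boundedness statement for $\langle (D-\mu_N)^{n-k}\rangle_N$.

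The third step supplies that uniform bound. For even exponents $n-k = 2m$, Theorem \ref{bound} already provides a finite limit, so those moments are bounded in $N$. For odd exponents $n-k = 2m+1$, Cauchy--Schwarz gives
\begin{equation}
\bigl|\bigl\langle (D-\mu_N)^{2m+1}\bigr\rangle_{N}\bigr| \;\le\; \bigl\langle (D-\mu_N)^{2m}\bigr\rangle_{N}^{1/2}\,\bigl\langle (D-\mu_N)^{2m+2}\bigr\rangle_{N}^{1/2},
\end{equation}
bounding odd moments by neighboring even ones, which are controlled by the previous case. Hence every term with $k\ge 1$ contributes $o(1)$, and the $N\to\infty$ limit of the left-hand side of \eqref{threelevel} agrees with that of \eqref{RHSlimit}.

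\textbf{Main obstacle.} The binomial expansion and the Cauchy--Schwarz step are purely formal once Theorem \ref{bound} is available. The only genuinely analytic input is the qualitative statement $\delta_N \to 0$, which in turn relies on quantitative Petersson/Kuznetsov estimates showing that the finite-level mean of the $1$-level density converges to the Katz--Sarnak prediction; this input is implicit in the proof of Theorem \ref{bound} itself and, as noted in the corollary, a careful exposition is given in \cite{DM}.
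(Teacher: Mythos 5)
Your argument is exactly the one the paper sketches: decompose $(D-\mu)^n = \sum_k \binom{n}{k}(-\delta_N)^k (D-\mu_N)^{n-k}$, note that Theorem~\ref{bound} controls the even centered moments so that each odd one is bounded via Cauchy--Schwarz, and conclude that every $k\ge 1$ term is $o(1)$ since $\delta_N\to 0$. This matches the paper's own proof (which cites the Binomial Theorem and Cauchy--Schwarz and defers full details to \cite{DM}), with you simply supplying the details; the support condition on $\widehat\phi$ needed to apply Theorem~\ref{bound} at the lower exponents $n-k$ (and at $n-k+1$ in the Cauchy--Schwarz step) is implied by the one assumed at level $n$, so no gap arises.
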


\begin{thm}[See \cite{ILS, HM}]\label{mean}
     The mean over SO(even) is
    \begin{equation}
        \mu(\phi, \mathcal{F}) \ := \ \widehat{\phi}(0) + \frac{1}{2}\int_{-\infty}^{\infty}\widehat{\phi}(y)dy.
    \end{equation}
\end{thm}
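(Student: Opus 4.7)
The plan is to derive the identity directly from the Katz--Sarnak scaling-limit density for SO(even). The 1-level density kernel for that symmetry type is
\[
    W_{{\rm SO(even)}}(x) \;=\; 1 + \frac{\sin(2\pi x)}{2\pi x},
\]
and the mean of the test function $\phi$ against this symmetry type is defined by $\mu(\phi,\mathcal{F})=\int_{-\infty}^{\infty}\phi(x)\,W_{{\rm SO(even)}}(x)\,dx$. The task therefore reduces to evaluating
\[
    \int_{-\infty}^{\infty}\phi(x)\,dx \;+\; \int_{-\infty}^{\infty}\phi(x)\,\frac{\sin(2\pi x)}{2\pi x}\,dx
\]
and matching it with $\widehat\phi(0)+\tfrac{1}{2}\int_{-\infty}^{\infty}\widehat\phi(y)\,dy$.

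The constant piece is immediate: $\int_{-\infty}^{\infty}\phi(x)\,dx=\widehat\phi(0)$, by evaluating Definition \ref{fouriertransform} at $y=0$. For the sinc piece, I would identify $\sin(2\pi x)/(2\pi x)=\sinc(2x)$ in the paper's normalization (Definition \ref{sinc}). Lemma \ref{sincfourier} gives $\mathfrak{F}(\sinc(x))=\rect(x)$, and the scaling rule of Lemma \ref{scalingderiv} with $c=2$ then yields
\[
    \mathfrak{F}\bigl(\sinc(2x)\bigr)(y) \;=\; \tfrac{1}{2}\,\rect(y/2),
\]
which by Lemma \ref{rectangular} equals $1/2$ on $(-1,1)$ and $0$ outside, off a null set at the boundary.

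Next I apply Plancherel (Theorem \ref{Plancherel}) with the two real, even functions $a(x)=\phi(x)$ and $b(x)=\sinc(2x)$, which converts the sinc integral into
\[
    \int_{-\infty}^{\infty}\phi(x)\sinc(2x)\,dx \;=\; \int_{-\infty}^{\infty}\widehat\phi(y)\cdot\tfrac{1}{2}\rect(y/2)\,dy \;=\; \tfrac{1}{2}\int_{-1}^{1}\widehat\phi(y)\,dy.
\]
Under the standing support hypothesis of this segment of the paper, $\supp(\widehat\phi)\subset(-\sigma/n,\sigma/n)\subset(-1,1)$, so the integrand already vanishes outside $(-1,1)$ and one may extend the limits to $\pm\infty$ without changing the value. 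Summing the two contributions produces the claimed identity $\mu(\phi,\mathcal{F})=\widehat\phi(0)+\tfrac{1}{2}\int_{-\infty}^{\infty}\widehat\phi(y)\,dy$.

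The main obstacle is really just bookkeeping: matching the paper's Fourier conventions so that the scaling factor and the width of $\rect$ line up without leaving any stray $2\pi$'s (the $e^{-2\pi i x y}$ convention of Definition \ref{fouriertransform} makes this clean, since $\mathfrak{F}(\sinc(2x))$ is directly an indicator on $(-1,1)$ with height $1/2$). A secondary subtlety is the support window: the Fourier transform of $W_{{\rm SO(even)}}$ is strictly supported in $[-1,1]$, so for test functions with $\widehat\phi$ of larger support the replacement of $\int_{-1}^{1}$ by $\int_{-\infty}^{\infty}$ is not automatic on the random-matrix side; in that regime the stated identity is instead the content of the Iwaniec--Luo--Sarnak and Hughes--Miller results cited alongside the statement, which I would simply invoke rather than re-derive.
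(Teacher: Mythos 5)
The paper supplies no proof of this statement---it is quoted from \cite{ILS, HM}---so your derivation from the Katz--Sarnak density $W_{{\rm SO(even)}}(x)=1+\sin(2\pi x)/(2\pi x)$ provides an argument the paper omits. The computation itself is sound and is the standard one: $\int_{-\infty}^{\infty}\phi(x)\,dx=\widehat\phi(0)$, and Plancherel together with $\mathfrak{F}(\sinc(2x))(y)=\tfrac12\rect(y/2)$ converts the sinc integral into $\tfrac12\int_{-1}^{1}\widehat\phi(y)\,dy$; the $2\pi$'s and the width of the boxcar are all handled correctly in the paper's conventions.

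The one genuine issue is the support step. You assert that the standing hypothesis gives $\supp(\widehat\phi)\subset(-\sigma/n,\sigma/n)\subset(-1,1)$, but that containment fails in exactly the case the paper leans on hardest, namely $n=1$ with $\sigma=2$ (the source of the headline bound $\omega_{\min}\approx 0.25$), where $\widehat\phi_\omega$ is supported in $(-2,2)$. In that regime $\tfrac12\int_{-1}^{1}\widehat\phi(y)\,dy$ is not equal to $\tfrac12\int_{-\infty}^{\infty}\widehat\phi(y)\,dy=\tfrac12\phi(0)$, so your identification $\mu(\phi,\mathcal F)=\int\phi\,W_{{\rm SO(even)}}$ no longer reproduces the stated formula. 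The way the paper's bookkeeping actually works is that, despite the label, $\mu(\phi,\mathcal F)=\widehat\phi(0)+\tfrac12\phi(0)$ is the full-orthogonal (``diagonal'') mean, and the genuinely ${\rm SO(even)}$ correction $\tfrac12\int_{-1}^{1}\widehat\phi(y)\,dy-\tfrac12\phi(0)$ is carried inside $S(n,a;\phi)$ via the $-\tfrac12\phi^{m}(0)+\int\phi^{m-l}(x_1)\frac{\sin(2\pi x_1(1+\cdots))}{2\pi x_1}\,dx_1$ structure of $R(m,i;\phi)$; you can see the cancellation explicitly in the paper's own $n=1$ computation, where the $-\tfrac12\phi_\omega(0)$ from $R(1,1;\phi_\omega)$ cancels the $+\tfrac12\phi_\omega(0)$ from $\mu$ and leaves $\int_0^1\widehat\phi_\omega(y)\,dy$. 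So for $\supp(\widehat\phi)\subseteq[-1,1]$ your proof is complete as written; beyond that range the displayed formula should be read as the definition of the centering constant rather than as the value of $\int\phi\,W_{{\rm SO(even)}}$, and your closing deferral to \cite{ILS, HM} is the right move rather than a cosmetic one.
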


We restate our first main result, Theorem \ref{oddmainresult}, for convenience, and then give the proof. \\ \

\noindent \textbf{Theorem \ref{oddmainresult}: Assume GRH. For $n$ odd, whenever $\omega$ satisfies the inequality
        \begin{equation}
        -\left( \widehat\phi_{\omega}(0) + \frac{1}{2}\int_{-\sigma/n}^{\sigma/n}\widehat{\phi}_{\omega}(y)dy\right)^{n} < \ S(n, a; \phi_{\omega}),
        \end{equation}
    there exists at least one form with one normalized zero on the interval $(-\omega, \omega)$.}

\begin{proof}
    As remarked earlier we focus only the family of even forms; there are trivial modifications if one studies the odd forms (since all forms with an odd functional equation are guaranteed a zero at the central point, we would need to remove that contribution).

    We start with equation \eqref{threelevel}. As $\phi_{\omega}$ is non-negative when $|x| \leq \omega$ and non-positive when $|x| > \omega$, the contribution of the scaled zeroes to the centered moment when $|x| \geq \omega$ is non-positive. Since the power is odd, if we drop those non-positive contributions, it will not increase the left-hand side; \emph{this is why we restrict to odd levels}. Thus the centered moment gives a lower bound
    \begin{equation}
        \lim_{\substack{N\to\infty \\ N \text{\rm prime}}} \frac{1}{|\mathcal{F}_{N}|}\sum_{f \in \mathcal{F}_{N}} \left( \sum_{|\gamma_{f, j}| \leq \omega}\phi_{\omega}(\widetilde{\gamma}_{f, j}c_{n}) -  \mu(\phi_{\omega}, \mathcal{F}) \right)^{n} \ \geq \  S(n, a; \phi_{\omega}).
    \end{equation}
    Assume no forms have a normalized zero in the interval $(-\omega, \omega)$. Then the scaled zeroes with imaginary part at most $\omega$ in absolute value contribute 0 to the centered moment, and thus
    \begin{equation}
        \lim_{\substack{N\to\infty \\ N \text{\rm prime}}} \frac{1}{|\mathcal{F}_{N}|}\sum_{f \in \mathcal{F}_{N}} \left( -  \mu(\phi_{\omega}, \mathcal{F}) \right)^{n} \ \geq \  S(n, a; \phi_{\omega}).
    \end{equation}
    Since the mean is independent of $f$, we can extract it from the summation:
    \begin{equation}
        \left( -\mu(\phi_{\omega}, \mathcal{F}) \right)^{n} \lim_{\substack{N\to\infty \\ N \text{\rm prime}}} \frac{1}{|\mathcal{F}_{N}|}\sum_{f \in \mathcal{F}_{N}} 1 \ \geq \  S(n, a; \phi_{\omega}).
    \end{equation}
    As $\lim_{\substack{N\to\infty \\ N \text{\rm prime}}} \frac{1}{|\mathcal{F}_{N}|}\sum_{f \in \mathcal{F}_{N}} 1 = \lim_{\substack{N\to\infty \\ N \text{\rm prime}}} \frac{|\mathcal{F}_{N}|}{|\mathcal{F}_{N}|} = 1$, we find
    \begin{equation}
        \left( - \mu(\phi_{\omega}, \mathcal{F}) \right)^{n} \ \geq \  S(n, a; \phi_{\omega}).
    \end{equation}
    Because of the compact support of $\widehat\phi_{\omega}$, by Theorem \ref{mean} this is
    \begin{equation}
        -\left( \widehat\phi_{\omega}(0) + \frac{1}{2}\int_{-\sigma/n}^{\sigma/n}\widehat\phi_{\omega}(y)dy\right)^{n} \geq \  S(n, a; \phi_{\omega}).
    \end{equation}
    Thus, if we choose $\omega$ that satisfies the following inequality
    \begin{equation}\label{inequality}
        -\left( \widehat\phi_{\omega}(0) + \frac{1}{2}\int_{-\sigma/n}^{\sigma/n}\widehat\phi_{\omega}(y)dy\right)^{n} < \  S(n, a; \phi_{\omega}),
    \end{equation}
     we get a contradiction, indicating there has to be at least one form with one normalized zero on the interval $(-\omega, \omega)$.
\end{proof}

\begin{rem}
The theorem above improves itself as the zeros are distributed symmetrically along the critical line, and thus if $1/2 + i \gamma$ is a zero so too is $1/2 - i \gamma$. In fact, there are at least two zeros in such a window.
\end{rem}

%%%%%%%%%%%%%%%%%%%%%%%%%%%%%%%%%%%%%%%%%%%%%%%%%%%%
%%%%%%%%%%%%%%%%%%%%%%%%%%%%%%%%%%%%%%%%%%%%%%%%%%%%
%%%%%%%%%%%%%%%%%%%%%%%%%%%%%%%%%%%%%%%%%%%
%%%%%%%%%%%%%%%%%%%%%%%%%%%%%%%%%%%%%%%%%%%
%%%%%%%%%%%%%%%%%%%%%%%%%%%%%%%%%%%%%%%%%%%
\subsection{Explicit bounds on first zero}
We examine the inequality in \eqref{inequality} and give an explicit formula for the one-level calculation of the bound $\omega$.

\begin{thm} Assume GRH.
    The first normalized zero of the family of cuspidal newforms is in the interval $(\omega_{\min}, \omega_{\min})$, where
    \begin{equation}
        \omega_{\min}(\sigma, h) \ > \ \left( -\frac{\sigma \int_{0}^{1}h(u)^{2}\,du + \frac{\sigma^2}{4}\int_{-1}^{1}\int_{0}^{2/\sigma} h(u) h(v - u) \,dv \,du}{\frac{1}{\sigma}\int_{0}^{1} h(u)h''(u) \,du + \frac{1}{4}\int_{-1}^{1}\int_{0}^{2/\sigma} h(u) h''(v - u) \,dv \,du}\right)^{-\frac{1}{2}} \pi^{-1}.
    \end{equation}
\end{thm}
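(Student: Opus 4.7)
The plan is to derive the explicit bound by specializing the argument of Theorem \ref{oddmainresult} to $n=1$, where the ``$n^{\text{th}}$ centered moment'' collapses to the one-level density (the mean). The strategy is proof by contradiction: assume no form in $\mathcal{F}_{N}$ has a normalized zero in $(-\omega,\omega)$, and exhibit a threshold on $\omega$ above which the hypothesis forces $\mu(\phi_{\omega},\mathcal{F}) \le 0$, contradicting a direct computation that shows $\mu(\phi_{\omega},\mathcal{F})>0$. The smallest such $\omega$ is then an upper bound for the lowest first normalized zero of the family.

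First I would invoke Theorem \ref{mean} to write
\begin{equation*}
    \mu(\phi_{\omega},\mathcal{F}) \ = \ \widehat{\phi}_{\omega}(0)+\tfrac{1}{2}\int_{-\sigma}^{\sigma}\widehat{\phi}_{\omega}(y)\,dy,
\end{equation*}
and then substitute $\widehat{\phi}_{\omega}(y) = g(y)+(2\pi\omega)^{-2}g''(y)$, with $g = f\ast f$ and $f(y) = h(2y/\sigma)$, to split the mean cleanly as
\begin{equation*}
    \mu(\phi_{\omega},\mathcal{F}) \ = \ A(\sigma,h) \ +\ \frac{B(\sigma,h)}{(2\pi\omega)^{2}},
\end{equation*}
where $A := g(0)+\tfrac{1}{2}\int_{-\sigma}^{\sigma} g(y)\,dy$ and $B := g''(0)+\tfrac{1}{2}\int_{-\sigma}^{\sigma} g''(y)\,dy$. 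The sign argument used in the proof of Theorem \ref{oddmainresult}, combined with the non-positivity of $\phi_{\omega}$ outside $(-\omega,\omega)$, shows that the no-zero hypothesis forces $\mu(\phi_{\omega},\mathcal{F}) \le 0$ in the large-$N$ limit.

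Next I would express $A$ and $B$ as explicit integrals of $h$ and $h''$. For the point values, the change of variables $u=2t/\sigma$ and the evenness of $h$ yield $g(0) = \int f(t)^{2}\,dt = \sigma\int_{0}^{1}h(u)^{2}\,du$ and $g''(0) = \int f(t)f''(t)\,dt = (4/\sigma)\int_{0}^{1}h(u)h''(u)\,du$; the latter is negative after one integration by parts (using $h(\pm 1)=0$ and $h'(0)=0$ from evenness), which gives $-g''(0) = (4/\sigma)\int_{0}^{1}(h'(u))^{2}\,du>0$ and hence $B<0$. For the remaining pieces $\int g$ and $\int g''$ I would expand the convolutions, swap the order of integration, and apply the substitutions $u = 2t/\sigma$ and $v = 2y/\sigma$ to produce the stated double integrals of $h(u)h(v-u)$ and $h(u)h''(v-u)$. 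Since $A>0$ (because $f\ge 0$ forces $g\ge 0$ on $[-\sigma,\sigma]$), the required contradiction $\mu(\phi_{\omega},\mathcal{F})>0$ rearranges to $\omega^{2} > -B/(4\pi^{2}A)$; inserting the explicit integral expressions for $A$ and $B$ yields the claimed lower bound on $\omega_{\min}$.

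The main obstacle is careful bookkeeping: tracking signs when the inequality is divided by the negative quantity $B$, arranging the changes of variables so the limits of integration agree with the statement, and treating the boundary behavior of $g$, $g'$, and $g''$ at $y=\pm\sigma$ correctly. In particular, for the optimal choice $h(x)=\cos(\pi x/2)$ on $[-1,1]$ one has $h'(\pm 1)\ne 0$, so $f'$ has a jump at $\pm\sigma/2$; one must then decide whether to write $\int_{-\sigma}^{\sigma}g''(y)\,dy$ using the classical or the distributional second derivative, since integration by parts on $g'$ gives the boundary value $g'(\pm\sigma)=0$ classically while the double-integral form picks up delta contributions from the jumps of $f'$ that must be handled consistently. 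Aside from these technical points, the derivation is a direct specialization of the $n$-level machinery already set up in the proof of Theorem \ref{oddmainresult}, and substituting $h(x)=\cos(\pi x/2)$ with $\sigma=2$ gives the stated $\gamma_{\min}\approx 0.25$.
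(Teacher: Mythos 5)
Your high-level strategy (specialize the centered-moment contradiction to $n=1$, split $\widehat\phi_\omega = g + (2\pi\omega)^{-2}g''$, change variables to get double integrals of $h$ and $h''$, then isolate $\omega$) is the same as the paper's, and your computations of $g(0)$, $g''(0)$, and the change-of-variables pattern are correct. However, there is a genuine gap in the central step, and it changes the final formula.

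You claim the no-zero hypothesis forces $\mu(\phi_\omega,\mathcal F)\le 0$, where $\mu(\phi_\omega,\mathcal F) = \widehat\phi_\omega(0) + \tfrac12\int_{-\sigma}^{\sigma}\widehat\phi_\omega(y)\,dy$, and then derive the condition $\mu>0$. That is not what the $n=1$ case of the argument gives. Following the proof of Theorem \ref{oddmainresult}, the no-zero hypothesis yields $-\mu(\phi_\omega,\mathcal F) \ge S(1,1;\phi_\omega)$, equivalently $\mu(\phi_\omega,\mathcal F) \le -S(1,1;\phi_\omega)$; the quantity that must be nonpositive is the full ${\rm SO(even)}$ one-level density $\mu + S(1,1;\phi_\omega)$, not $\mu$ alone. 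The term $S(1,1;\phi_\omega) = -\tfrac12\phi_\omega(0) + \int\phi_\omega(x)\tfrac{\sin 2\pi x}{2\pi x}\,dx$ coming from the density kernel is essential: after the $-\tfrac12\phi_\omega(0)$ cancellation and Plancherel (the Fourier transform of $\tfrac{\sin 2\pi x}{2\pi x}$ is $\tfrac12\rect(y/2)$, supported on $|y|\le 1$), the contradiction condition becomes $\widehat\phi_\omega(0) + \int_0^1 \widehat\phi_\omega(y)\,dy > 0$, with the integral cut off at $1$. Your version replaces this with $\widehat\phi_\omega(0) + \int_0^{\sigma}\widehat\phi_\omega(y)\,dy > 0$, i.e.\ you integrate over the full support $(-\sigma,\sigma)$. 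After the substitution $v=2y/\sigma$, the correct upper limit is $2/\sigma$ (as in the stated theorem), while yours would be $2$; for the case of interest $\sigma=2$ these are $\int_0^1$ versus $\int_0^2$, which give different values (the integrand $h(u)h(v-u)$ is nonzero for some $v\in(1,2]$). So the approach as written does not reproduce the stated formula or the claimed $\gamma_{\min}\approx 0.25$. The fix is to start from the inequality $-\mu < S(1,1;\phi_\omega)$, write out $S(1,1;\phi_\omega)$ explicitly, cancel the $-\tfrac12\phi_\omega(0)$ terms, and use Plancherel on $\int\phi_\omega(x)\tfrac{\sin 2\pi x}{2\pi x}\,dx$ to obtain the truncated integral $\int_0^1\widehat\phi_\omega$; from there your change-of-variables bookkeeping does give the theorem's expression with $\int_0^{2/\sigma}$.

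Two smaller points: your sign analysis ($B<0$, $A>0$) is fine under the $C^2$/vanishing-boundary assumptions, and the distributional-vs-classical second derivative issue you flag at $y=\pm\sigma$ is a reasonable concern, but it is moot once the domain of integration is correctly cut at $|y|\le 1 < \sigma$.
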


\begin{proof}
Note that $\int_{-\sigma/n}^{\sigma/n}\widehat\phi_{\omega}(y)dy = \phi_{\omega}(0)$. Thus for $n = 1$, we can rewrite the inequality in \eqref{inequality} as
    \begin{equation}\label{LHSsimplified}
        - \widehat\phi_{\omega}(0) - \frac{1}{2}\phi_{\omega}(0) \ < \ S(1, 1; \phi_{\omega}).
    \end{equation}
    By Theorem \ref{bound}, the right hand side of \eqref{LHSsimplified} is
    \begin{equation}
        S(1, 1; \phi_{\omega}) \ = \ R(1, 1; \phi_{\omega}) \ = \ -\frac{1}{2}\phi_\omega(0) + \int_{-\infty}^{\infty}\phi_\omega(x)\frac{\sin(2\pi x)}{2\pi x}\,dx.
    \end{equation}
    The $-\frac12\phi_\omega(0)$ terms cancel, so we focus simplifying the second term. Notice that directly integrating against the sine function is hard, so instead we use the Plancherel Theorem (Theorem \ref{Plancherel}) and rewrite the integral as an integral of the product of their Fourier transforms. The Fourier transform of $\phi_\omega(x)$ is $\widehat\phi_\omega(y)$. To perform the integration of $\phi_\omega(x)$ against $\sin(2\pi x)/2\pi x$, note the latter is the $\sinc$ function from Definition \ref{sinc}. Since the Fourier transform of the sinc function is the rectangular function, with some re-scaling by Theorem \ref{scalingderiv} we get
    \begin{equation}
        \int_{-\infty}^{\infty}\phi_\omega(x)\frac{\sin(2\pi x)}{2\pi x}\,dx \ = \ 2\int_{0}^{\infty}\widehat\phi_\omega(y)\frac{1}{2}\rect\left(\frac{y}{2}\right)\,dy.
    \end{equation}
    Notice the rectangular function $\rect\left(\frac{y}{2}\right)$ equals $1$ from $-1$ to $1$ and $0$ everywhere else, so we may rewrite the above integral as $\int_0^1\widehat\phi_\omega(y)dy$. Thus the RHS simplifies to $-\frac{1}{2}\phi_\omega(0) + \int_0^1\widehat\phi_\omega(y)dy$. Combining with what we have for the LHS from \eqref{LHSsimplified}, we get that the bound for the smallest first normalized zero is when
    \begin{equation}\label{phiequality}
         - \widehat\phi_{\omega}(0) \ < \ \int_0^1\widehat\phi_\omega(y)\,dy.
    \end{equation}
    We now simplify $\widehat\phi_\omega(0)$ in terms of an arbitrary $h$ that fulfills our conditions. From the definition of $\widehat\phi_{\omega}$ we have
    \begin{equation}
        \widehat\phi_{\omega}(0) \ = \ g(0) + (2 \pi \omega)^{-2}g''(0).
    \end{equation}
    Through the construction of $g$ and by taking advantage of the evenness of $h$ and $f$, we get
    \begin{equation}
        g(0)\ =\ \int_{-\infty}^{\infty}f(t)f(0 - t)dt = 2\int_{0}^{\sigma/2}h\left(\frac{2t}{\sigma}\right)^{2}dt \ =\ \sigma\int_{0}^{1}h(u)^{2}du.
    \end{equation}
    As $g = f \ast f$ implies $g'' = f \ast f''$ we find
    \begin{align}
        g''(0) & \ = \ \int_{-\sigma/2}^{\sigma/2} f(t)f''(0 - t) \,dt\nonumber\\
			     & \ = \ 2\int_{0}^{\sigma/2} f(t)f''(t) \,dt\nonumber\\
			     & \ = \ 2\int_0^{\sigma/2} h\left(\frac{2t}{\sigma}\right) \frac{4}{\sigma^2}h''\left(\frac{2t}{\sigma}\right) \,dt\nonumber\\	
                 & \ = \ \frac{4}{\sigma}\int_{0}^{1} h(u)h''(u) \,du.
    \end{align}
    Combining all of the above, we find
    \begin{equation}
        -\widehat\phi_\omega(0) \ = \ -\sigma\int_{0}^{1}h(u)^{2}\,du - (2 \pi \omega)^{-2}\frac{4}{\sigma}\int_{0}^{1} h(u)h''(u) \,du.
    \end{equation}
    We now simplify the RHS of \eqref{phiequality}. By the expansion of $\widehat\phi_{\omega}$,
    \begin{equation}
        \int_0^1\widehat\phi_\omega(y)\,dy \ = \ \int_0^1 g(y)\,dy + (2\pi\omega)^{-2}\int_0^1 g''(y)\,dy.
    \end{equation}
    To rewrite the integrals in terms of $h$, we detail the procedure to simplify the first term from above; the second term follows analogously\footnote{The Fubini-Tonelli theorem states that we can change the order of integration for a two-dimensional integral if it yields a finite values when the integrand is replaced by its absolute value. Since the function we are using is finitely supported, using this theorem is justified.}.

    \begin{align}
        \int_0^1 g(y)\,dy & \ = \ \int_0^1 \int_{-\infty}^{\infty}f(t)f(y-t) \,dt \,dy \nonumber \\
        & \ = \ \int_0^1 \int_{-\sigma/2}^{\sigma/2} h\left( \frac{2t}{\sigma}\right) h\left( \frac{2(y-t)}{\sigma} \right) \,dt \,dy\nonumber \\
        & \ = \ \frac{\sigma}{2}\int_0^1 \int_{-1}^{1} h(u) h\left( \frac{2y}{\sigma} - u\right) \,du \,dy \nonumber \\
        & \ = \ \frac{\sigma}{2}\int_{-1}^{1}\int_0^1 h(u) h\left( \frac{2y}{\sigma} - u\right) \,dy \,du \nonumber \\
        & \ = \ \frac{\sigma^2}{4}\int_{-1}^{1}\int_{0}^{2/\sigma} h(u) h(v - u) \,dv \,du.
    \end{align}
    We go through the same procedure to simplify the second term. Eventually, the right-hand side becomes
    \begin{equation}
        \int_0^1\widehat\phi_\omega(y) \,dy \ = \ \frac{\sigma^2}{4}\int_{-1}^{1}\int_{0}^{2/\sigma} h(u) h(v - u) \,dv \,du + (2 \pi \omega)^{-2}\int_{-1}^{1}\int_{0}^{2/\sigma} h(u) h''(v - u) \,dv \,du.
    \end{equation}
    Putting the simplified versions of two sides of \eqref{phiequality} together and isolating $\omega$, we get the expression for the minimum $\omega$:
    \begin{equation}
        \omega \ > \ \left( -\frac{\sigma \int_{0}^{1}h(u)^{2}\,du + \frac{\sigma^2}{4}\int_{-1}^{1}\int_{0}^{2/\sigma} h(u) h(v - u) \,dv \,du}{\frac{1}{\sigma}\int_{0}^{1} h(u)h''(u) \,du + \frac{1}{4}\int_{-1}^{1}\int_{0}^{2/\sigma} h(u) h''(v - u) \,dv \,du}\right)^{-\frac{1}{2}} \pi^{-1}.
    \end{equation}
\end{proof}

\begin{rem}
    Since number theory has only been able to compute the needed quantities for $\sigma$ up to $2$ (under the assumption of GRH), we get $\omega_{\min}(2, h) \approx 0.25$ for $h = \cos(\pi y/2)$ is the optimum function that yields the smallest bound. We also looked at results for level 3 and 5 by directly integrating both sides of the inequality, hoping to see better bounds than the one-level. Since the right-hand side involves a multi-dimensional integral of a very complicated function ($\phi_{\omega}$, involving Fourier transforms and convolutions), we estimate it through Riemann Sums, see \ref{riemannsum}. Although it cannot give exact values, the approximation is sufficiently close as we have a small error term. For our evaluation, we chose to use the simple function $h(y) = 1 - y^2$ to construct our test function $\phi_\omega$. This is because the complex nature of the right-hand side makes evaluation extremely slow, and since optimizing the function has little improvement in the final results, we are satisfied with using the chosen $h$ for now (though ongoing studies will focus on finding the optimal function to use). However, although we expect higher levels to yield better bounds for support sufficiently large, that is not the case here. As we increase the level, the bound gets worse for support $\sigma = 2$: for $n = 3$, $\omega \approx .34$, and for $n = 5$, $\omega \approx .85$.
\end{rem}

%%%%%%%%%%%%%%%%%%%%%%%%%%%%%%%%%%%%%%%%%%%%%%%%%%%%%%%%%%%%%%%%%%%%%%%%%%%%%%%%%%%%%%%%%%%%%%%%%%%%%%%%%%%%%%%%%%%%%%%%%%%%%%%%%
%%%%%%%%%%%%%%%%%%%%%%%%%%%%%%%%%%%%%%%%%%%%%%%%%%%%%%%%%%%%%%%%%%%%%%%%%%%%%%%%%%%%%%%%%%%%%%%%%%%%%%%%%%%%%%%%%%%%%%%%%%%%%%%%%
\section{Proof of Theorem \ref{evenmainresult}}

We obtain an upper bound through even centered moments and using the naive test function $\phi$ defined in \eqref{naive}.

\begin{thm} Assume GRH.
Let $P_{r,\rho}(\mathcal{F})$ denote the percent of forms in the family that has at least $r$ normalized zeros on the interval $(-\rho, \rho)$ near the central point. For an even $n$ and $r \geq \mu(\phi, \mathcal{F}) / \phi(\rho)$,
    \begin{equation}
        P_{r,\rho}(\mathcal{F}) \ \leq \ \frac{1_{n \ \rm even}(n-1)!! \sigma_{\phi}^n + S(n, a; \phi)}{(r\phi(\rho) - \mu(\phi, \mathcal{F}))^n}.
    \end{equation}
\end{thm}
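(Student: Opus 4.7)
The plan is to establish this bound via a Chebyshev/Markov-style argument applied to the $n$-th centered moment (with $n$ even), exactly in the spirit of the bounds on order of vanishing from \cite{DM, HM, LM}, but adapted from ``zero at the central point'' to ``$r$ zeros within $\rho$ of it.'' The right-hand side is the limiting centered moment furnished by Theorem \ref{bound} (equivalently the Corollary expressing the moment as an average over the family), so the whole task is to lower-bound the centered moment by the contribution of the ``bad'' forms, i.e.\ those in $\mathcal{F}_{N,r}^{(\rho)}$.

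First I would fix a form $f \in \mathcal{F}_{N,r}^{(\rho)}$ and control the inner sum $\sum_j \phi(\widetilde{\gamma}_{f,j} c_n)$ from below. Since $\phi = \phi_{\mathrm{naive};n}$ is non-negative and is monotonically decreasing in $|x|$ on the support region of interest (so that $\phi(x) \geq \phi(\rho)$ whenever $|x| \leq \rho$), each of the $r$ normalized zeros lying in $(-\rho,\rho)$ contributes at least $\phi(\rho)$, and the remaining zeros contribute non-negatively. Thus
\begin{equation*}
\sum_j \phi(\widetilde{\gamma}_{f,j} c_n) \ \geq \ r\,\phi(\rho).
\end{equation*}
Subtracting the mean gives
\begin{equation*}
\sum_j \phi(\widetilde{\gamma}_{f,j} c_n) - \mu(\phi,\mathcal{F}) \ \geq \ r\,\phi(\rho) - \mu(\phi,\mathcal{F}),
\end{equation*}
and the hypothesis $r \geq \mu(\phi,\mathcal{F})/\phi(\rho)$ guarantees the right-hand side is non-negative. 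Since $n$ is even, both sides can be raised to the $n$-th power while preserving the inequality, yielding
\begin{equation*}
\left(\sum_j \phi(\widetilde{\gamma}_{f,j} c_n) - \mu(\phi,\mathcal{F})\right)^{n} \ \geq \ \bigl(r\,\phi(\rho) - \mu(\phi,\mathcal{F})\bigr)^{n}.
\end{equation*}

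Next, I would sum over all $f \in \mathcal{F}_N$. For forms \emph{not} in $\mathcal{F}_{N,r}^{(\rho)}$ the $n$-th power is still non-negative (again because $n$ is even), so it may be discarded from the lower bound. This gives
\begin{equation*}
\frac{1}{|\mathcal{F}_N|}\sum_{f \in \mathcal{F}_N} \!\left(\sum_j \phi(\widetilde{\gamma}_{f,j} c_n) - \mu(\phi,\mathcal{F})\right)^{\!n} \ \geq \ \frac{|\mathcal{F}_{N,r}^{(\rho)}|}{|\mathcal{F}_N|}\bigl(r\,\phi(\rho) - \mu(\phi,\mathcal{F})\bigr)^{n}.
\end{equation*}
Taking $N \to \infty$ through primes, the left-hand side converges to the even-sign version of the Theorem \ref{bound} formula, namely $(n-1)!!\,\sigma_{\phi}^n + S(n,a;\phi)$ (the indicator $1_{n\text{ even}}$ is unity here), and the ratio on the right converges to $P_{r,\rho}(\mathcal{F})$ by definition. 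Dividing through by $(r\phi(\rho)-\mu(\phi,\mathcal{F}))^n > 0$ yields the claimed bound.

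The main obstacle (really the only non-cosmetic point) is the monotonicity step $\phi(\widetilde{\gamma}) \geq \phi(\rho)$ for $|\widetilde{\gamma}| \leq \rho$; the naive choice $\phi_{\mathrm{naive};n}(x) = (\sin(\pi\sigma_n x)/\pi\sigma_n x)^2$ is non-negative but is only monotone decreasing on $(0, 1/\sigma_n)$, so one must either restrict $\rho$ to that first lobe or argue more carefully by bounding $\phi$ below by a manifestly monotone minorant. Every other step is formal: non-negativity of $\phi$ keeps the tail zeros harmless, the even exponent preserves the inequality after subtracting the mean, and the compactly supported, Schwartz nature of $\phi$ makes Theorem \ref{bound} directly applicable to identify the limiting right-hand side.
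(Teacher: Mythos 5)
Your argument is correct and follows essentially the same Markov-type bound as the paper: drop the non-bad forms via evenness of $n$, lower-bound the surviving inner sum by $r\phi(\rho)$ using non-negativity and monotonicity, discard the tail contribution via the hypothesis $r \geq \mu(\phi,\mathcal{F})/\phi(\rho)$, and pass to the limit to identify $P_{r,\rho}(\mathcal{F})$; the only difference from the paper's presentation is the (cleaner) order of operations, as you lower-bound the inner sum for a fixed bad form before summing over the family, whereas the paper sums first and then replaces the inner sum. The monotonicity caveat you flag is real but is shared with the paper's own proof, which only says ``$\phi$ is decreasing from $(0,\rho)$'' without imposing an explicit restriction; in the quantitative applications $\rho$ is always taken below $1/\sigma_n$, the first zero of $\phi_{\mathrm{naive};n}$, so the bound $\phi(x)\geq\phi(\rho)$ on $[-\rho,\rho]$ holds exactly as you need.
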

\begin{proof}
    We first rewrite \eqref{threelevel} as
    \begin{equation}
        \lim_{\substack{N\to\infty \\ N \text{\rm prime}}} \frac{1}{|\mathcal{F}_{N}|}\sum_{f \in \mathcal{F}_{N}} \left( \sum_{|\widetilde{\gamma}_{f, j}| \leq \rho}\phi(\widetilde{\gamma}_{f, j}c_{n}) + T_f(\phi) -  \mu(\phi, \mathcal{F}) \right)^{n} \ = \ 1_{n \ \rm even}(n-1)!! \sigma_{\phi}^n + S(n, a; \phi),
    \end{equation}
    where $T_f(\phi)$ is the contribution from the scaled $\phi(\gamma_{f, j}c_{n})$ when $|\gamma_{f, j}| > \rho$.

    We look at forms with at least $r$ normalized zeros within a distance $\rho$ from the central point, and denote the set of these forms by $\mathcal{F}_{N,r}^{(\rho)}$. Since $n$ is even, by dropping all the form with less than $r$ normalized zeros in the interval $(-\rho, \rho)$, we are dropping a non-negative sum; \emph{this is why we work with an even moment}. Since doing so cannot increase the size of the left-hand side, we obtain the following upper bound:
    \begin{equation}
        \lim_{\substack{N\to\infty \\ N \text{\rm prime}}} \frac{1}{|\mathcal{F}_{N}|}\sum_{f \in \mathcal{F}_{N,r}^{(\rho)}} \left( \sum_{|\widetilde{\gamma}_{f, j}| \leq \rho}\phi(\widetilde{\gamma}_{f, j}c_{n}) + T_f(\phi) -  \mu(\phi, \mathcal{F}) \right)^{n} \ \leq \ 1_{n \ \rm even}(n-1)!! \sigma_{\phi}^n + S(n, a; \phi).
    \end{equation}
    We then replace the summation of $\phi(\widetilde{\gamma}_{f, j}c_{n})$ with $r\phi(\rho)$, only counting the contribution of $r$ zeros. This involves several steps, and importantly none of these lead to an increase in the LHS. This is because $\phi$ is non-negative, $n$ is even, and since $\phi$ is decreasing from $(0, \rho)$, if we replace the contribution of the test function at these zeros by $\phi(\rho)$ we do not increase the sum. It is important to note that $r$ is an even number, since by symmetry, if $1/2 + i\gamma$ is a zero, so too is $1/2 - i\gamma$.

    We also want to be able to drop the term $T_f(\phi)$ and not increase left-hand side. However, although $\phi$ is non-negative, this is not the case if the sum of the first two terms in the parentheses are smaller than the absolute value of the mean. Thus we restrict $r$ to be greater than or equal to $\mu(\phi, \mathcal{F}) / \phi(\rho)$, and further decrease the LHS by dropping $T_f(\phi)$:
    \begin{equation}
        \lim_{\substack{N\to\infty \\ N \text{\rm prime}}} \frac{1}{|\mathcal{F}_{N}|} \sum_{f \in \mathcal{F}_{N,r}^{(\rho)}} \left(r\phi(\rho) -  \mu(\phi, \mathcal{F}) \right)^{n} \ \leq \ 1_{n \ \rm even}(n-1)!! \sigma_{\phi}^n + S(n, a; \phi).
    \end{equation}
    As the summand is independent of $f$, we move it outside the sum and find
    \begin{equation}
         \left(r\phi(\rho) -  \mu(\phi, \mathcal{F}) \right)^{n} \lim_{\substack{N\to\infty \\ N \text{\rm prime}}} \frac{1}{|\mathcal{F}_{N}|}\sum_{f \in \mathcal{F}_{N,r}^{(\rho)}} 1 \ \leq \ 1_{n \ \rm even}(n-1)!! \sigma_{\phi}^n + S(n, a; \phi).
    \end{equation}
    Notice, by \eqref{percent}, $\lim_{\substack{N\to\infty \\ N \text{\rm prime}}} \frac{1}{|\mathcal{F}_{N}|}\sum_{f \in \mathcal{F}_{N,r}^{(\rho)}} 1 = \lim_{\substack{N\to\infty \\ N \text{\rm prime}}}\frac{|\mathcal{F}_{N,r}^{(\rho)}|}{|\mathcal{F}_{N}|} = P_{r,\rho}(\mathcal{F})$. Isolating the percentage yields
    \begin{equation}
        P_{r,\rho}(\mathcal{F}) \ \leq \ \frac{1_{n \ \rm even}(n-1)!! \sigma_{\phi}^n + S(n, a; \phi)}{\left(r\phi(\rho) -  \mu(\phi, \mathcal{F}) \right)^{n} }.
    \end{equation}
\end{proof}

%%%%%%%%%%%%%%%%%%%%%%%%%%%%%%%%%%%%%%%%%%%
%%%%%%%%%%%%%%%%%%%%%%%%%%%%%%%%%%%%%%%%%%%
%%%%%%%%%%%%%%%%%%%%%%%%%%%%%%%%%%%%%%%%%%%
\subsection{Explicit bounds for percentages}\label{tablessection} We provide explicit bounds calculated using Theorem \ref{evenmainresult}. Notice that all number of zeros are even by symmetry.

\begin{center}
\begin{table}[ht]
\caption{\label{table1} Probability of forms having a given number of normalized zeros $0.2$ away from the central point, using the naive test function with a support of $2/n$. "N/A" means that the level cannot give the bound because of the restriction in Theorem \ref{evenmainresult}}.
\begin{tabular}{ |p{3.2cm}||p{3cm}|p{3cm}|p{3cm}|  }
 \hline
 Number of zeros & 2-level & 4-level & 6-level \\
 \hline
 2 & 6.651738 & N/A & N/A \\
 \hline
 4 & 0.104108 & 2.370419 & 2.147231$\cdot 10^4$ \\
 \hline
 6 & 0.029617 & 0.069998 & 0.809927 \\
 \hline
 8 & 0.013769 & 0.011079 & 0.022517 \\
 \hline
 10 & 0.007924 & 0.003152& 0.002427 \\
 \hline
 12 & 0.005142 & 0.001213 & 4.79813$\cdot 10^{-4}$  \\
 \hline
 14 & 0.003605 & 5.612212$\cdot 10^{-4}$ & 1.340970$\cdot 10^{-4}$\\
 \hline
 16 & 0.002666 & 2.942389$\cdot 10^{-4}$ & 4.688515$\cdot 10^{-5}$ \\
 \hline
 18 & 0.002052 & 1.687747$\cdot 10^{-4}$ & 1.917773$\cdot 10^{-5}$ \\
 \hline
 20 & 0.001627 & 1.036100$\cdot 10^{-4}$ & 8.809943$\cdot 10^{-6}$ \\
 \hline
\end{tabular}
\end{table}
\end{center}

\begin{center}
\begin{table}[ht]
\caption{\label{table2}Probability of forms having a given number of normalized zeros $0.4$ away from the central point, using the naive test function with a support of $2/n$.}.
\begin{tabular}{ |p{3.2cm}||p{3cm}|p{3cm}|p{3cm}|  }
 \hline
 Number of zeros & 2-level & 4-level & 6-level \\
 \hline
 4 & 0.665694 & 8.334733 & 1.744392$\cdot 10^3$ \\
 \hline
 6 & 0.111085 & 0.145883 & 1.585718 \\
 \hline
 8 & 0.043857 & 0.020351 & 0.036592 \\
 \hline
 10 & 0.023310 & 0.005469 & 0.003680 \\
 \hline
 12 & 0.014430 & 0.002038 & 0.000702 \\
 \hline
 14 & 0.009804 & 0.000924 & 0.000192 \\
 \hline
 16 & 0.007093 & 0.000475 & 6.606784$\cdot 10^{-5}$ \\
 \hline
 18 & 0.005369 & 0.000271 & 2.673289$\cdot 10^{-5}$ \\
 \hline
 20 & 0.004204 & 0.000165 & 1.218053$\cdot 10^{-5}$ \\
 \hline
\end{tabular}
\end{table}
\end{center}

As shown in both Table \ref{table1} and \ref{table2}, we see the pattern that for bounding a small number of zeros, the 2-level does a significantly better job than the 4 or 6 level. In fact, both bounds for the 4 and 6 level when $r = 4$ are useless as they already exceed a hundred percent! Moreover, levels 4 and 6 cannot even produce bounds for $r = 1$ because of the restriction of $r \geq \mu(\phi, \mathcal{F}) / \phi(\rho)$ in our theorem. However, as we look at a larger number of zeros in the interval, the 4-level does eventually yield a better bound than the 2-level, and similarly eventually the 6-level beats both the 2-level and the 4-level in giving the smallest bound.

\begin{center}
\begin{table}[ht]
\caption{\label{table3} Probability of forms having a given number of normalized zeros $0.8$ away from the central point, using the naive test function with a support of $2/n$. "N/A" means that the level cannot give the bound because of the restriction in Theorem \ref{evenmainresult}.}
\begin{tabular}{ |p{3.2cm}||p{3cm}|p{3cm}|p{3cm}|  }
 \hline
 Number of zeros & 2-level & 4-level & 6-level \\
 \hline
 6 & N/A & 10.849910 & 48.154279 \\
 \hline
  & &  &  \\
 \hline
 16 & N/A & 0.004235 & 2.83230$\cdot 10^{-4}$\\
 \hline
  & &  &  \\
 \hline
 26 & N/A & 3.541901$\cdot 10^{-4}$  & 6.716802$\cdot 10^{-6}$ \\
 \hline
 28 & 420.045063 & 2.486819$\cdot 10^{-4}$ & 3.943864$\cdot 10^{-6}$  \\
 \hline
 30 & 20.991406 & 1.796948$\cdot 10^{-4}$  & 2.418466$\cdot 10^{-6}$ \\
 \hline
 32 & 6.651738 & 1.330555$\cdot 10^{-4}$ &  1.538761$\cdot 10^{-6}$ \\
 \hline
 34 & 3.220871 & 1.006126$\cdot 10^{-4}$ &  1.010576$\cdot 10^{-6}$ \\
 \hline
\end{tabular}
\end{table}
\end{center}

We also see the pattern that for the same number of zeros, the chance of them existing in a smaller range $(-\rho, \rho)$ is much lower than in a larger interval. For instance, using $n=2$ the probability of having $8$ zeros on the interval $(-.2, .2)$ is approximately $0.013769$, while for the interval $(-.4, .3)$ it is much larger, about $0.043857$.

Higher levels or moments are also better at bounding the number of normalized zeros within a larger distance from the central point. Because of the restriction of $r \geq \mu(\phi, \mathcal{F}) / \phi(\rho)$, lower levels are simply incapable of bounding small number of zeros! For example, in Table \ref{table3} when we fixed a bigger $\rho = .8$, the 2-level can only non-trivially bound the number of zeros when $r > 27$, while the 4-level and 6-level are perfectly fine with $r$ as small as $6$. We also see the 6-level producing much better bounds than smaller levels in Table \ref{table3}. Thus, even though the higher levels are much more difficult to calculate, the tradeoff is worthwhile as we see its significance when we increase the interval $(-\rho, \rho)$ as well as the number of zeros $r$.

%%%%%%%%%%%%%%%%%%%%%%%%%%%%%%%%%%%%%%%%%%%%%%%%%%%%%%%%%%%%%%%%%%%%%%%%%%%%%%%%%%%%%%%%%%%%%%%%%%%%%%%%%%%%%%%%%%%%%%%%%%%%%%%%%
%%%%%%%%%%%%%%%%%%%%%%%%%%%%%%%%%%%%%%%%%%%%%%%%%%%%%%%%%%%%%%%%%%%%%%%%%%%%%%%%%%%%%%%%%%%%%%%%%%%%%%%%%%%%%%%%%%%%%%%%%%%%%%%%%

\section{Future work}
In Theorem \ref{oddmainresult} we focused only on the even forms of the family, as the odd forms are guaranteed to have one zero on the central point. Our result can be extended to odd forms by simply adding that contribution.

There are several directions one can take to improve these results. One is to optimize the test function. Though the optimum test function for the one-level density was determined in previous studies, optimizing test function for higher level densities is open. Building off of the framework of the construction of the test function in Theorem \ref{oddmainresult}, it is possible to alter it by adding a quartic term, for instance, to see improvement on bounds for higher levels. However, previous work highlights that the greatest gains come from increasing the support and using higher $n$-level, which is what we have focused on in this paper. At the cost of some additional standard computations, one can easily add additional terms and obtain similar integrals to approximate; initial explorations indicated that the gain was minimal, and thus we focused on the significant improvements that arise with higher level densities and moments, justifying the additional work required.

For higher levels, one can also try to simplify the limit calculation through complex analysis, making evaluation easier for explicit bounds.

%%%%%%%%%%%%%%%%%%%%%%%%%%%%%%%%%%%%%%%%%%%%%%%%%%%%%%%%%%%%%%%%%%%%%%%%%%%%%%%%%%%%%%%%%%%%%%%%%%%%%%%%%%%%%%%%%%%%%%%%%%%%%%%%%
%%%%%%%%%%%%%%%%%%%%%%%%%%%%%%%%%%%%%%%%%%%%%%%%%%%%%%%%%%%%%%%%%%%%%%%%%%%%%%%%%%%%%%%%%%%%%%%%%%%%%%%%%%%%%%%%%%%%%%%%%%%%%%%%%

%%%%%%%%%%%%%%%%%%%%%%%%%%%%%%%%%%%%%%%%%%%%%%%%%%%%%%%%%%%%%%%%%%%%%%%%%%%%%%%%%%%%%%%%%%%%%%%%%%%%%%%%%%%%%%%%%%%%%%%%%%%%%%%%%%%%%%%%%%%%%%%%
%%%%%%%%%%%%%%%%%%%%%%%%%%%%%%%%%%%%%%%%%%%%%%%%%%%%%%%%%%%%%%%%%%%%%%%%%%%%%%%%%%%%%%%%%%%%%%%%%%%%%%%%%%%%%%%%%%%%%%%%%%%%%%%%%%%%%%%%%%%%%%%%
%%%%%%%%%%%%%%%%%%%%%%%%%%%%%%%%%%%%%%%%%%%%%%%%%%%%%%%%%%%%%%%%%%%%%%%%%%%%%%%%%%%%%%%%%%%%%%%%%%%%%%%%%%%%%%%%%%%%%%%%%%%%%%%%%%%%%%%%%%%%%%%%
%%%%%%%%%%%%%%%%%%%%%%%%%%%%%%%%%%%%%%%%%%%%%%%%%%%%%%%%%%%%%%%%%%%%%%%%%%%%%%%%%%%%%%%%%%%%%%%%%%%%%%%%%%%%%%%%%%%%%%%%%%%%%%%%%%%%%%%%%%%%%%%%
%%%%%%%%%%%%%%%%%%%%%%%%%%%%%%%%%%%%%%%%%%%%%%%%%%%%%%%%%%%%%%%%%%%%%%%%%%%%%%%%%%%%%%%%%%%%%%%%%%%%%%%%%%%%%%%%%%%%%%%%%%%%%%%%%%%%%%%%%%%%%%%%
%%%%%%%%%%%%%%%%%%%%%%%%%%%%%%%%%%%%%%%%%%%%%%%%%%%%%%%%%%%%%%%%%%%%%%%%%%%%%%%%%%%%%%%%%%%%%%%%%%%%%%%%%%%%%%%%%%%%%%%%%%%%%%%%%%%%%%%%%%%%%%%%
%%%%%%%%%%%%%%%%%%%%%%%%%%%%%%%%%%%%%%%%%%%%%%%%%%%%%%%%%%%%%%%%%%%%%%%%%%%%%%%%%%%%%%%%%%%%%%%%%%%%%%%%%%%%%%%%%%%%%%%%%%%%%%%%%%%%%%%%%%%%%%%%
%%%%%%%%%%%%%%%%%%%%%%%%%%%%%%%%%%%%%%%%%%%%%%%%%%%%%%%%%%%%%%%%%%%%%%%%%%%%%%%%%%%%%%%%%%%%%%%%%%%%%%%%%%%%%%%%%%%%%%%%%%%%%%%%%%%%%%%%%%%%%%%%
%%%%%%%%%%%%%%%%%%%%%%%%%%%%%%%%%%%%%%%%%%%%%%%%%%%%%%%%%%%%%%%%%%%%%%%%%%%%%%%%%%%%%%%%%%%%%%%%%%%%%%%%%%%%%%%%%%%%%%%%%%%%%%%%%%%%%%%%%%%%%%%%

\ \\
\end{document}